\documentclass{amsart}
\usepackage[latin1]{inputenc}
\usepackage{ae,graphics}

\theoremstyle{plain}
\newtheorem{lemma}{Lemma}
\newtheorem{theorem}{Theorem}
\newtheorem{corollary}[theorem]{Corollary}

\theoremstyle{definition}
\newtheorem{definition}{Definition}
\newtheorem{remark}{Remark}

\newtheorem{example}{Example}

\DeclareMathOperator{\Fac}{Fac}

\author[J. Cassaigne]{Julien Cassaigne}
\address{J. Cassaigne, Institut de Mat{\'e}matiques de Luminy, CNRS, Campus de Luminy, Case 907, 13288 Marseille Cedex 9, France}
\email{cassaigne@iml.univ-mrs.fr}
\author[E. Duch\^ene.]{Eric Duch\^ene}
\address{E. Duch\^ene, Universit{\'e} de Lyon, CNRS, Universit{\'e} Lyon 1, LIRIS, UMR5205, F-69622, France}
\email{eric.duchene@univ-lyon1.fr}
\author[M. Rigo]{Michel Rigo}
\address{M. Rigo, University of Liege, Department of Mathematics, Grande traverse 12 (B37), B-4000 Li{\`e}ge, Belgium.}
\email{M.Rigo@ulg.ac.be}

\title{Invariant games and non-homogeneous Beatty sequences}

\keywords{Two-player combinatorial game, Beatty sequence, Sturmian word, Invariant game, Superadditivity}

\begin{document}

\begin{abstract}
    We characterize all the pairs of complementary non-homogenous
    Beatty sequences $(A_n)_{n\ge 0}$ and $(B_n)_{n\ge 0}$ for which
    there exists an invariant game having exactly $\{(A_n,B_n)\mid
    n\ge 0\}\cup \{(B_n,A_n)\mid n\ge 0\}$ as set of
    $\mathcal{P}$-positions. Using the notion of Sturmian word and
    tools arising in symbolic dynamics and combinatorics on words,
    this characterization can be translated to a decision procedure
    relying only on a few algebraic tests about algebraicity or
    rational independence.  Given any four real numbers defining the two
    sequences, up to these tests, we can therefore decide whether or
    not such an invariant game exists.
\end{abstract}

\maketitle

\section{Introduction}

This paper presents some interplay between combinatorics on words and
combinatorial game theory.  Given four real numbers
$\alpha,\beta,\gamma,\delta$ defining two sequences $(A_n)_{n\ge 1}$
and $(B_n)_{n\ge 1}$ making a partition of $\mathbb{N}_{>0}$ with
$A_n:=\lfloor n\alpha+\gamma \rfloor$ and $B_n:=\lfloor n\beta+\delta
\rfloor$, for all $n\ge 1$. Is there an invariant game whose set of
$P$-positions is exactly
$\{(A_n,B_n)\mid n\ge 1\}\cup \{(B_n,A_n)\mid n\ge 1\}\cup \{(0,0)\}$?\\

Even though some basic notions about combinatorial game theory will be
recalled in this paper, the reader can refer to \cite{WW} for some
background on the topic.

\subsection{Invariant games} The last two authors introduced the
notion of an {\em invariant game} \cite{DR} as follows. Consider a
two-player impartial removal game played on some piles of tokens (at
least one pile).  The two players play alternatively.  For each move
belonging to the set of allowed moves prescribed by the game, one can
remove a positive number of tokens from one or several piles. The
player who makes the last move wins. Such a game is said to be {\em
  invariant} if all moves are ``playable from any position'', i.e.,
each allowed move is independent from the position it is played from
(provided there is enough tokens left). For instance, in the game of
Nim, a player can remove any positive number of tokens from one pile.
In the game of Wythoff played on $2$ piles \cite{W}, one can moreover remove
the same number of tokens simultaneously from the two piles. For an
example of ``non-invariant'' or {\em variant} game, consider the
following situation. We play with a single pile of tokens. If the
number of tokens is a prime, then one has to remove a unique token.
Otherwise, the player may remove any positive even number of tokens.
Clearly, in that game, the allowed moves depend on the actual
position. Other examples of {\em variant} games are the Rat or Euclid games \cite{Rat,Euclid}.

Recall that a position is said to be a {\em $P$-position}, if there
exists a strategy for the second player (i.e., the player who will
play on the next round) to win the game, whatever the moves of the
first player are.  Note that games having different sets of moves may
share the same set of $P$-positions. In the literature, studies about
removal games on heaps generally fix the rule-set of the game and try
to find nice characterizations of the underlying $P$-positions.
Recently, several works were focused on a reversed vision of it: starting from
a sequence $S$ of tuples of integers, can one find one or several
games having $S$ as set of $P$-positions?  In \cite{dfnr}, are
considered the invariant extensions (adding new moves) and
restrictions (removing some moves) of Wythoff's game that preserve its
set of $P$-positions. The set of $P$-positions of Wythoff's game has a
nice algebraic characterization: the $n$-th $P$-position is $(\lfloor
n\tau\rfloor, \lfloor n\tau^2\rfloor)$ where $\tau$ is the golden
ratio \cite{W}. In \cite{bo}, the same problem is considered for
non-invariant extensions and restrictions leading to the same set of
$P$-positions. In general, obtaining games with the same set of
$P$-positions is challenging. It has been recently proved that there
is no algorithm, given two games played over the same number of heaps
(and both described by a finite set of allowed moves), deciding
whether or not these two games have the same set of $P$-positions
\cite{Lar1}.

\smallskip

In that context, a general question is the following one: Given a pair
$(A_n,B_n)_{n\ge 0}$ of {\em complementary sequences}, i.e.,
$\{A_n\mid n\ge 1\}$ and $\{B_n\mid n\ge 1\}$ make a partition of
$\mathbb{N}_{>0}$ and $A_0=B_0=0$, {\em does there exist an invariant
  game having $$\mathcal{P}:=\{(A_n,B_n)\mid n\ge 0\}\cup
  \{(B_n,A_n)\mid n\ge 0\}$$ as set of $P$-positions?}  To tackle such
a problem, one can specialize the question to some specific instance
or to well-known families of sequences.  \smallskip

In \cite{DR}, such invariant games are obtained for some particular
family of homogenous Beatty sequences, namely when $A_n=\lfloor
n\alpha\rfloor$, for all $n\ge 0$, where $\alpha$ is a quadratic
irrational number having an ultimately periodic continued fraction
expansion of the kind $(1;\overline{1,k})$. Note that for $k=1$, we
find back the golden ratio with periodic continued fraction expansion
$(1;\overline{1})$. Games related to quadratic irrational numbers with
expansion $(1;\overline{k})$ have been considered in \cite{W3}.

Moreover, the following conjecture was stated:
for any pair $(A_n,B_n)_{n\ge 0}$ of complementary (homogenous) Beatty
sequences, i.e., for two irrational numbers $\alpha,\beta>0$ such that
$\frac{1}{\alpha}+\frac{1}{\beta}=1$, $A_n=\lfloor n\alpha\rfloor$ and
$B_n=\lfloor n\beta\rfloor$, there exists an invariant game having
$\mathcal{P}$ as set of $P$-positions.

\subsection{Superadditivity} In \cite{Lar}, Larsson {\em et al.} give sufficient
conditions on a pair $(A_n,B_n)_{n\ge 0}$ of complementary sequences
that guarantee the existence of an invariant game with the prescribed
set of $P$-positions. Their main condition is that the sequence
$(B_n)_{n\ge 0}$ is {\em $B_1$-superadditive}, i.e., for all $m,n>0$,
$B_m+B_n\le B_{m+n}<B_m+B_n+B_1$. They observe that any pair of
complementary homogenous Beatty sequences satisfy in particular this
latter property. Therefore, their result provides a positive answer
(even for a wider family of complementary sequences) to the question of
the existence of an invariant game as conjectured in \cite{DR}.

The problem being solved positively for homogenous Beatty sequences,
it is natural to address the same question in the framework of
complementary non-homogenous Beatty sequences.

Note that the authors
of \cite{Lar} have conjectured that a pair $(A_n,B_n)_{n\ge 1}$ of
non-homogeneous complementary Beatty sequences with $A_1=1$ gives rise
to an invariant game if and only if the sequence $(B_n)_{>0}$ is
$B_1$-superadditive. We show in Section~\ref{sec:b1}, that this
conjecture turns out to be false.

\subsection{The case of non-homogeneous Beatty sequences}
In this paper, we consider any pair $(A_n,B_n)_{n\ge 0}$ of complementary
{\em non-homogeneous Beatty sequences} defined as follows. Let $\alpha<\beta$
be positive irrational numbers satisfying
\begin{equation}
    \label{eq:r1}
    \frac{1}{\alpha}+\frac{1}{\beta}=1
\end{equation}
In particular, this implies that $1<\alpha<2<\beta$.  Let
$\gamma,\delta$ be two real numbers.  We set $A_0=B_0=0$ and for $n\ge
1$, $$A_n:=\lfloor n\alpha+\gamma \rfloor,\quad B_n:=\lfloor
n\beta+\delta \rfloor.$$ In what follows, we assume that
\begin{equation}
    \label{eq:r2}
    A_1=1 \text{ and  }B_1\geq 3,
\end{equation}
\begin{equation}
    \label{eq:r3}
    \{A_n\mid n\ge 1\} \text{ and  }\{B_n\mid n\ge 1\}\text{ make a partition of }\mathbb{N}_{>0}.
\end{equation}

\begin{remark}
    Condition \ref{eq:r3} is standard in the context of removal games on heaps. Indeed, it allows all the moves of the type $(0,k)_{k>0}$, which means that the games we are looking for are extensions of Nim. In addition, we have set $A_1=1$ to guarantee the inequality $A_n<B_n$ for all $n>0$. However this condition is not crucial and its removal would not fundamentally change our work.
The condition $B_1\geq 3$ is thus the only limitation of our result.
\end{remark}

Note that $A_1=1$ implies $1-\alpha\le\gamma<2-\alpha$.  Necessary and
sufficient conditions for a non-homogeneous Beatty sequence to satisfy
the above partition constraint \eqref{eq:r3} are given by Fraenkel in the
following result:
\begin{theorem}\label{thm1}\cite{Fra}
    Let $\alpha<\beta$ be positive irrational numbers satisfying
    $\frac{1}{\alpha}+\frac{1}{\beta}=1$. Then $(\lfloor
    n\alpha+\gamma \rfloor)_{n>0}$ and $(\lfloor n\beta+\delta
    \rfloor)_{n>0}$ make a partition of $\mathbb{N}_{>0}$ if and only
    if
    \begin{equation}
        \label{eq:r4}
        \frac{\gamma}{\alpha}+\frac{\delta}{\beta}=0 \text{ and, }
    \end{equation}
    \begin{equation}
        \label{eq:r5}
        \text{for all } n\ge 1,\ n\beta+\delta\not\in\mathbb{Z}.
    \end{equation}
\end{theorem}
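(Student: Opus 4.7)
The plan is to convert the partition condition into an integer-counting identity and then extract the two algebraic conditions from it. Since $\alpha,\beta>1$ the sequences $(A_n)_{n\ge 1}$ and $(B_n)_{n\ge 1}$ are strictly increasing, so they partition $\mathbb{N}_{>0}$ if and only if for every $m\ge 1$
\[
N_A(m)+N_B(m)=m,\qquad N_A(m):=\#\{n\ge 1\,:\,A_n\le m\},
\]
and analogously for $N_B$. Using $\lfloor n\alpha+\gamma\rfloor\le m \iff n\alpha+\gamma<m+1$, one obtains $N_A(m)=\lfloor(m+1-\gamma)/\alpha\rfloor$ except in the exceptional case $(m+1-\gamma)/\alpha\in\mathbb{Z}$, where the count drops by $1$, and symmetrically for $N_B(m)$. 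Combined with $1/\alpha+1/\beta=1$, which yields
\[
\frac{m+1-\gamma}{\alpha}+\frac{m+1-\delta}{\beta}=(m+1)-\left(\frac{\gamma}{\alpha}+\frac{\delta}{\beta}\right),
\]
the generic counting identity $N_A(m)+N_B(m)=m$ is then equivalent to
\begin{equation}\label{eq:fracsum}
\left\{\frac{m+1-\gamma}{\alpha}\right\}+\left\{\frac{m+1-\delta}{\beta}\right\}=1-\left(\frac{\gamma}{\alpha}+\frac{\delta}{\beta}\right).
\end{equation}

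For the necessity of \eqref{eq:r4}, I would use the key observation that $(m+1)/\alpha+(m+1)/\beta=m+1\in\mathbb{Z}$ forces $\{(m+1)/\alpha\}+\{(m+1)/\beta\}=1$ for every $m\ge 0$. Setting $t:=\{(m+1)/\alpha\}$, which is equidistributed in $(0,1)$ by Weyl's theorem (since $1/\alpha$ is irrational), the left-hand side of \eqref{eq:fracsum} becomes a piecewise constant function of $t$ whose only jumps are of size $-1$ at $t=\{\gamma/\alpha\}$ and $+1$ at $t=1-\{\delta/\beta\}$. A short case analysis shows that this function is constant on a dense subset of $(0,1)$ only if the two jumps coincide ($\{\gamma/\alpha\}+\{\delta/\beta\}=1$) or if both lie on the boundary ($\{\gamma/\alpha\}=\{\delta/\beta\}=0$); in either case the constant value is $1$, and \eqref{eq:fracsum} then forces $\gamma/\alpha+\delta/\beta=0$.

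For the necessity of \eqref{eq:r5}, I would argue by contradiction: if $n_0\beta+\delta=k\in\mathbb{Z}$ for some $n_0\ge 1$, then $B_{n_0}=k$, and \eqref{eq:r4} combined with $(k-\delta)/\beta=n_0$ forces $(k-\gamma)/\alpha=k-n_0\in\mathbb{Z}$, whence $(k-n_0)\alpha+\gamma=k$ and $A_{k-n_0}=k$ as well. Provided $k-n_0\ge 1$, which holds in the regime forced by positivity of the $A$-sequence, the integer $k$ would belong to both sequences, contradicting disjointness.

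Conversely, assuming \eqref{eq:r4} and \eqref{eq:r5}, the case analysis of the second paragraph secures \eqref{eq:fracsum} for every $m$ where neither $(m+1-\gamma)/\alpha$ nor $(m+1-\delta)/\beta$ is an integer; condition \eqref{eq:r5} excludes precisely the problematic $B$-side coincidences, while each $A$-side coincidence $(m+1-\gamma)/\alpha\in\mathbb{Z}$ is paired, via \eqref{eq:r4}, with a compensating drop in the $B$-count, keeping the identity $N_A(m)+N_B(m)=m$ valid. I expect the main obstacle to be this delicate bookkeeping of integer coincidences, together with the boundary case analysis of the two sawtooth functions in the previous paragraph.
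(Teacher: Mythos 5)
First, a point of comparison: the paper does not prove this theorem at all --- it is imported from Fraenkel \cite{Fra} --- so your proposal has to stand on its own. Your overall strategy (reduce the partition property to the counting identity $N_A(m)+N_B(m)=m$, rewrite it as a fractional-part identity, and extract \eqref{eq:r4} from equidistribution of $\{(m+1)/\alpha\}$ via the jump-cancellation analysis of the two sawtooth terms) is the standard route to such results, and the \eqref{eq:r4} part is essentially sound: each of the two integrality exceptions can occur for at most one value of $m$ (because $k\alpha+\gamma\in\mathbb{Z}$ for at most one integer $k$ when $\alpha$ is irrational), so the set of admissible $t$ stays dense, and your two surviving cases both force the constant value $1$ and hence $\gamma/\alpha+\delta/\beta=0$.

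The genuine gap is in your necessity argument for \eqref{eq:r5}, at exactly the point you flagged. From \eqref{eq:r4} one gets $k-n_0=(k-\gamma)/\alpha$, and positivity of both sequences only forces $\gamma<2$ (since consecutive $B$-values differ by at least $2$, one of $1,2$ must be an $A$-value); this leaves open the boundary case $k-n_0=0$, which your phrase ``forced by positivity of the $A$-sequence'' does not exclude. And it really happens: take $\alpha=\tau$, $\beta=\tau^2$, $\gamma=1$, $\delta=1-\tau^2=-\tau$. Then \eqref{eq:r4} holds, and $A_n=\lfloor n\tau\rfloor+1$, $B_n=\lfloor(n-1)\tau^2\rfloor+1$ do partition $\mathbb{N}_{>0}$ (it is the homogeneous Beatty partition shifted up by one, with $B_1=1$ prepended), yet $1\cdot\beta+\delta=1\in\mathbb{Z}$, violating \eqref{eq:r5}. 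So this step cannot be completed as written: you must either invoke the normalization the paper imposes in its standing hypotheses ($A_1=1$, hence $\gamma<2-\alpha<1$, which kills the case $\gamma=k=n_0=1$) or treat that configuration as an explicit exception to the statement. A milder version of the same issue affects your converse: conditions \eqref{eq:r4} and \eqref{eq:r5} are preserved under $(\gamma,\delta)\mapsto(\gamma-j\alpha,\delta+j\beta)$ for $j\ge 1$, which eventually makes $A_1\le 0$, so the sufficiency direction also tacitly needs $\gamma,\delta$ restricted to a range where both sequences take values in $\mathbb{N}_{>0}$.
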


\begin{remark}\label{rem2}
    Note that the above condition \eqref{eq:r4} and our first
    assumptions on $\alpha$ and $\beta$ imply the following
    dependencies:
    \begin{itemize}
    \item If $0\leq \gamma<2-\alpha$, then we have $\delta\leq 0$.
    \item If $1-\alpha<\gamma<0$, then we get $0<\delta<1$.
    \end{itemize}
\end{remark}

In this paper, we will always assume that $\alpha,\beta,\gamma,\delta$
are real numbers satisfying \eqref{eq:r1}, \eqref{eq:r2}, \eqref{eq:r4}, \eqref{eq:r5} (and consequently \eqref{eq:r3} holds as a consequence of Theorem~\ref{thm1}). We set
$$\mathcal{P}:=\{(A_n,B_n)\mid n\ge 0\}\cup\{(B_n,A_n)\mid n\ge 0\}.$$
We characterize the values of $\alpha,\beta,\gamma,\delta$ for which
there exists an invariant game having $\mathcal{P}$ as set of
$P$-positions. More precisely, this paper is organized as follows. The
first difference $$(\lfloor (n+1)\lambda+\rho\rfloor-\lfloor
n\lambda+\rho\rfloor)_{n\ge 0}$$ of a Beatty sequence associated with
the irrational number $\lambda$ is what is usually called a Sturmian
word. In Section~\ref{sec:2}, we recap the needed background in
combinatorics on words about Sturmian words.  In particular, we are
interested in occurrences of some prescribed factor appearing in such a
word. Similar interplay between combinatorial game theory and combinatorics on words can be found in \cite{DR2,Flora}.

Section~\ref{sec:3} contains the technical core of the paper.  We
provide necessary and sufficient conditions for the existence of an
invariant game having $\mathcal{P}$ as set of $P$-positions. Our
condition has a combinatorial flavor: we define an infinite word
$\mathbf{w}$ built upon $\alpha,\beta,\gamma,\delta$ and the condition
is described in terms of factors occurring or not in this word. If the
$4$-tuple $(\alpha,\beta,\gamma,\delta)$ satisfies this condition,
then this $4$-tuple is said to be {\em good}. In Theorem~\ref{the:1}, we
prove that there is no invariant game having $\mathcal{P}$ as set of
$P$-positions and associated with a $4$-tuple which is not good. In
Theorem~\ref{the:principal}, we show that whenever the $4$-tuple is
good, then there is an invariant game having $\mathcal{P}-\mathcal{P}$
as set of moves.

In Section~\ref{sec:4}, we translate these combinatorial conditions into an
algebraic setting better suited to tests. It turns out that if
$\alpha,\beta,1$ are rationally independent, then we make use of the
two-dimensional version of the density theorem of Kronecker.
Otherwise, one has to study the relative position of a rectangle and a
straight line over the two-dimensional torus
$\mathbb{R}^2/\mathbb{Z}^2$.

In Section~\ref{sec:b1}, answering a conjecture of Larsson, Hegarty
and Fraenkel \cite{Lar}, we show that $B_1$-superadditivity is not a
necessary condition to get an invariant game.

\section{Sturmian sequences for game combinatorists}\label{sec:2}

In this section, we collect the main facts and notions on Sturmian
sequences that will be used in this paper. This section has been
written for a reader having no particular knowledge in combinatorics
on words. For general references, see for instance \cite{cant,lot2}.

An {\em alphabet} is a finite set. An {\em infinite word} or a {\em
  sequence} over the alphabet $\mathcal{A}$ is a map from $\mathbb{N}$
to $\mathcal{A}$. Infinite words will be denoted using bold face
symbols. Note also that the first element of an infinite word has
index $0$.  The so-called Sturmian words form a well-known and
extensively studied class of infinite words over a $2$-letter
alphabet. They can be defined in several equivalent ways, one of them
arising in the context of non-homogenous Beatty sequences. We now
present three equivalent definitions of Sturmian words namely, as
mechanical words, as codings of some rotations and in relation with a
balance property \cite[Chap.~2]{lot2}. Let $\lambda,\rho$ be two real numbers with
$\lambda>0$. In the literature, one usually consider
$\lambda\in(0,1)$ which is not a true restriction but, in that case,
the obtained word is written over the alphabet $\{0,1\}$. For an
arbitrary $\lambda$, the reader will notice that we will have to
consider in some situations its fractional part $\{\lambda\}$.

\subsection{Mechanical words} We define the infinite word
$\mathbf{s}_{\lambda,\rho}=(s_{\lambda,\rho}(n))_{n\ge 0}$ by
$$s_{\lambda,\rho}(n)=\lfloor (n+1)\lambda+\rho\rfloor-\lfloor n\lambda+\rho\rfloor,\quad \forall n\ge 0.$$
This word is often referred as a {\em lower mechanical word}
\cite{lot2}. It is not difficult to see that $s_{\lambda,\rho}(n)$
takes exactly the two values $\lfloor\lambda\rfloor$ and
$\lfloor\lambda\rfloor+1$.

\begin{example}\label{exa:fib}
    Take $\lambda=\rho=1/\tau$ where $\tau$ is the golden ratio
    $(1+\sqrt{5})/2$. The word $\mathbf{s}_{1/\tau,1/\tau}$ is the consecrated Fibonacci word whose first elements are
$$\mathbf{s}_{1/\tau,1/\tau}=101101011011010110101\cdots.$$
\end{example}

\subsection{Coding of rotations} Consider the one-dimensional torus
$\mathbb{T}^1=\mathbb{R}/\mathbb{Z}$ identified with $[0,1)$. Take the map
$R_\lambda:\mathbb{T}^1\to \mathbb{T}^1, x\mapsto \{x+\lambda\}$. One can
study the orbit of $\rho$ (reduced modulo $1$) under the action of
$R_\lambda$. We thus define two intervals $I_{\lfloor\lambda\rfloor}=[0,1-\{\lambda\})$ and
$I_{\lfloor\lambda\rfloor+1}=[1-\{\lambda\},1)$ partitioning $\mathbb{T}^1$.  One can show that
this setting provides another way to define the word
$\mathbf{s}_{\lambda,\rho}$. For all $n\ge 0$, we have
$$s_{\lambda,\rho}(n)=\lfloor\lambda\rfloor \Leftrightarrow R_\lambda^n(\rho)\in I_{\lfloor \lambda\rfloor}\text{ and }s_{\lambda,\rho}(n)=\lfloor\lambda\rfloor+1 \Leftrightarrow R_\lambda^n(\rho)\in I_{\lfloor\lambda\rfloor+1}.$$
This formalism is convenient to describe factors occurring in the
Sturmian word $\mathbf{s}_{\lambda,\rho}$. Recall that a {\em factor}
of length $\ell$ in a word $\mathbf{w}=w_0w_1\cdots$ is a finite
sequence made of $\ell$ consecutive elements: $w_i\cdots
w_{i+\ell-1}$. We say that the factor {\em occurs} in $\mathbf{w}$ {\em in
position} $i$. The set of all factors of length $\ell$ occurring in
$\mathbf{w}$ is denoted by $\Fac_{\mathbf{w}}(\ell)$ and the whole set of
factors of $\mathbf{w}$ is $$\Fac_{\mathbf{w}}=\bigcup_{\ell\ge 0}
\Fac_{\mathbf{w}}(\ell).$$ For a binary word $v=v_0v_1\cdots v_m$, for all $t$, $v_t\in\{\lfloor\lambda\rfloor,\lfloor\lambda\rfloor+1\}$, we
define a half-interval $I_{v,\lambda}$ of $\mathbb{T}^1$ as
\begin{equation}
    \label{eq:Iv}
I_{v,\lambda}:=I_{v_0}\cap R^{-1}_{\lambda}(I_{v_1})\cap\dots\cap R^{-m}_{\lambda}(I_{v_m}).
\end{equation}
One can show that $v$ occurs in $\mathbf{s}_{\lambda,\rho}$ in
position $i$ if and only if $R^i_{\lambda}(\rho) \in I_{v,\lambda}$. See
\cite[Section 2.1.2]{lot2}.

\subsection{A balance property} Let $\mathcal{A}$ be a finite
alphabet. Let $a\in\mathcal{A}$ and $u$ be a finite word over
$\mathcal{A}$. We denote by $|u|$ the length of $u$ and by $|u|_a$ the
number of occurrences of $a$ in $u$. An infinite word $\mathbf{w}$
over $\mathcal{A}$ is said to be {\em balanced} if, for all $n\ge 0$,
all $u,v\in \Fac_{\mathbf{w}}(n)$ and all $a\in\mathcal{A}$, we have
$||u|_a-|v|_a|\le 1$.

\begin{theorem}\cite[Theorem 2.1.5]{lot2}
    An infinite word over $\{0,1\}$ is Sturmian if and only if it is aperiodic and
    balanced.
\end{theorem}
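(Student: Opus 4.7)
The plan is to prove the two directions separately, handling the forward direction first since it reduces to direct computation. For the forward direction, assume $\mathbf{w} = \mathbf{s}_{\lambda,\rho}$ is mechanical with $\lambda \in (0,1)$ irrational. Aperiodicity is immediate: a period $p$ would force $R_\lambda^p(\rho) = \rho$ in $\mathbb{T}^1$, hence $p\lambda \in \mathbb{Z}$, contradicting irrationality. For balance, I would exploit the telescoping identity
$$\sum_{j=i}^{i+n-1} s_{\lambda,\rho}(j) = \lfloor (i+n)\lambda + \rho\rfloor - \lfloor i\lambda + \rho\rfloor,$$
which, since $\lambda \in (0,1)$, is exactly the number of occurrences of $1$ in the length-$n$ factor starting at position $i$. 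A simple floor-function bound confines this quantity to $\{\lfloor n\lambda\rfloor, \lceil n\lambda\rceil\}$, so any two such counts differ by at most $1$, yielding balance.

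For the converse, assume $\mathbf{w}$ over $\{0,1\}$ is aperiodic and balanced. The plan proceeds in two stages. First, I would establish the complexity bound $\#\Fac_{\mathbf{w}}(n) = n+1$: the inequality $\geq n+1$ is the Morse--Hedlund theorem for aperiodic infinite words, and the matching upper bound follows from balance by a factor-extension argument showing that two factors of length $n$ with the same number of $1$'s and ending in the same letter must coincide. Second, I would recover $\lambda$ and $\rho$: balance forces the letter frequency of $1$ to exist, I set $\lambda$ equal to this frequency, and aperiodicity guarantees $\lambda$ is irrational. To extract $\rho$, I would use the half-interval formalism of \eqref{eq:Iv}: the intervals $I_{v,\lambda}$ indexed by successive prefixes $v$ of $\mathbf{w}$ are nonempty, nested, and have lengths tending to $0$, so their intersection is a single point which we take as $\rho$.

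The main obstacle is the second stage of the converse, where combinatorial data must be turned into concrete real parameters. Two points need genuine care. First, the existence and irrationality of the frequency of $1$: existence follows from the bounded-difference property inherent in balance (a Cesaro-type argument), while irrationality uses aperiodicity to rule out the eventually periodic configurations that correspond to rational rotations. Second, one must verify that the constructed pair $(\lambda,\rho)$ actually reproduces $\mathbf{w}$, which amounts to showing $w_0 w_1 \cdots w_m$ agrees with the symbolic coding of $R_\lambda^0(\rho), \ldots, R_\lambda^m(\rho)$ for every $m$; this follows from the defining intersection \eqref{eq:Iv} of the nested intervals together with the complexity bound from the first stage, which ensures that no two distinct prefixes can label the same nonempty interval. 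This closes the loop between the combinatorial hypotheses and the geometric realization.
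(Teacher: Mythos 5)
The paper offers no proof of this statement to compare against: it is quoted directly from \cite{lot2}, Theorem 2.1.5, as a known result. Judged on its own terms, your forward direction is essentially sound: the telescoping identity does confine the number of $1$'s in any length-$n$ factor of $\mathbf{s}_{\lambda,\rho}$ to $\{\lfloor n\lambda\rfloor,\lfloor n\lambda\rfloor+1\}$, which gives balance. Your aperiodicity step is stated incorrectly, though: a period $p$ of the \emph{coding} does not immediately force $R_\lambda^p(\rho)=\rho$. The clean argument is via frequency: a period $p$ would make the number of $1$'s in the prefix of length $np$ equal to $nc+O(1)$ for an integer $c$, while the telescoping identity makes it $np\lambda+O(1)$, forcing $\lambda=c/p\in\mathbb{Q}$. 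This is fixable.

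The converse, which is the substantive half (the Morse--Hedlund theorem), contains a concretely false step. Your proposed lemma for the complexity upper bound --- that two factors of length $n$ with the same number of $1$'s and ending in the same letter must coincide --- is refuted by the paper's own Fibonacci example: $10110$ and $11010$ are distinct factors of length $5$, both end in $0$, and both contain exactly three $1$'s. The standard route to $\#\Fac_{\mathbf{w}}(n)\le n+1$ is different: one shows that if some length admitted two distinct right-special factors, the word would contain $0u0$ and $1u1$ for a common $u$, violating balance; hence a balanced word has at most one right-special factor of each length and the complexity grows by at most one per step. A second unpatched gap is the nonemptiness of the nested intervals $I_{v,\lambda}$ for the prefixes $v$ of $\mathbf{w}$: this is equivalent to asserting that every factor of $\mathbf{w}$ occurs in some mechanical word of slope $\lambda$, which is essentially the conclusion you are after, and it does not follow from merely counting $n+1$ factors on each side. (There is also the usual endpoint subtlety: a nested intersection of half-open intervals of vanishing length can be empty, and even when it is a single point, that point may generate the upper rather than the lower mechanical word, which differs from $\mathbf{w}$ in one position.) So the skeleton is the classical one, but the load-bearing steps of the hard direction are either wrong or presuppose what is to be proved.
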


This result implies that, for a given Sturmian word
$\mathbf{s}_{\lambda,\rho}$, up to permutation of the letters (i.e., up to abelian equivalence), there
are exactly two kinds of factors of length $\ell$, those having either
$\lceil \ell\{\lambda\} \rceil$, or $\lceil \ell\{\lambda\} \rceil-1$, symbols
$\lfloor\lambda\rfloor+1$.  The corresponding factors will be called
respectively {\em heavy} and {\em light}. In \cite{pav}, the following intervals are defined, for all $\ell>0$,
\begin{equation}
    \label{eq:IH}
    I_{H,\lambda}(\ell) = [1-\{\ell\lambda\},1) \text{ and }
        I_{L,\lambda}(\ell) = [0,1-\{\ell\lambda\})
\end{equation}
	 and it is
        proved that the factor of length $\ell$ occurring in position
        $i$ in $\mathbf{s}_{\lambda,\rho}$ is heavy if and only if
        $R^i_{\lambda}(\rho) \in I_{H,\lambda}(\ell)$.

        \begin{example}
            For the Fibonacci word introduced in
            Example~\ref{exa:fib}, there are exactly $6$ factors of
            length $5$. Five are light: $10110$, $01101$, $11010$,
            $10101$ and $01011$. They contain $\lceil
            5\{\tau\}\rceil-1=3$ symbols $1$. The unique heavy factor
            of length $5$ is $11011$ with $4$ symbols $1$.
        \end{example}

We will often make use of the following observation.

\begin{remark}\label{rem:heavy-light} Let $u=u_1\cdots u_n$ and $v=v_1\cdots v_n$ be two
    factors of length $n$ occurring in the Sturmian word
    $\mathbf{s}_{\lambda,\rho}$ over the alphabet
    $\{\lfloor\lambda\rfloor,\lfloor\lambda\rfloor+1\}$. We have
$$\sum_{i=1}^nu_i-\sum_{i=1}^nv_i=\left\{
    \begin{array}{rl}
1&\text{if }u\text{ is heavy and }v\text{ is light,}\cr
-1&\text{if }u\text{ is light and }v\text{ is heavy,}\cr
0&\text{if }u,v\text{ are both light (resp. heavy).}\cr
    \end{array}\right.$$
In particular, we have
$$   \sum_{i=1}^nv_i-1  \le    \sum_{i=1}^nu_i \le \sum_{i=1}^nv_i+1.$$
\end{remark}

\subsection{Direct product and synchronization} Let
$\mathbf{s}=(s_{n})_{n\ge 0}$ and $\mathbf{t}=(t_{n})_{n\ge 0}$ be two
infinite words over the alphabets $\mathcal{A}$ and $\mathcal{B}$
respectively. The {\em direct product} of $\mathbf{s}$ and
$\mathbf{t}$ is the sequence $\mathbf{s}\otimes \mathbf{t}=(p_n)_{n\ge
  0}$ where $$p_n=(s_n,t_n),\quad \forall n\ge 0.$$ Observe that
$\mathbf{s}\otimes \mathbf{t}$ is an infinite word over the alphabet
$\mathcal{A}\times\mathcal{B}$ of pairs of symbols.  We denote by
$\pi_1$ and $\pi_2$ the two homomorphisms of projection defined by
$\pi_1(x,y)=x$, $\pi_2(x,y)=y$, for all
$(x,y)\in\mathcal{A}\times\mathcal{B}$, and extended to
$\pi_1(\mathbf{s}\otimes \mathbf{t})=\mathbf{s}$ and
$\pi_2(\mathbf{s}\otimes \mathbf{t})=\mathbf{t}$.  If
$u\in\mathcal{A}^*$ and $v\in\mathcal{B}^*$ are two finite words of
the same length, one can define accordingly $u\otimes v$ and
$\pi_1(u\otimes v)=u$, $\pi_2(u\otimes v)=v$.  For more on
two-dimensional generalization of Sturmian sequences, see for instance
\cite{BV}. For some recurrence properties of direct product, see
\cite{pavel}.

In the next sections, up to some minor modifications of the first
symbol, we will be interested in the direct product of two Sturmian
words $\mathbf{s}_{\lambda,\rho}$ and $\mathbf{s}_{\mu,\nu}$ over the
alphabets
$\mathcal{A}_\lambda=\{\lfloor\lambda\rfloor,\lfloor\lambda\rfloor+1\}$ and
$\mathcal{A}_\mu=\{\lfloor\mu\rfloor,\lfloor\mu\rfloor+1\}$ respectively.

Consider the two-dimensional torus $\mathbb{T}^2=\mathbb{R}^2/\mathbb{Z}^2$ identified with
$[0,1)\times[0,1)$ and the map
$R_{\lambda,\mu}:\mathbb{T}^2\to\mathbb{T}^2,(x,y)\mapsto
(\{x+\lambda\},\{y+\mu\})$. It is obvious that $\mathbb{T}^2$ is split
into four regions of the kind $I_a\times I_b$ where $a\in\mathcal{A}_\lambda$
and $b\in\mathcal{A}_\mu$ in such a way that
$$(\mathbf{s}_{\lambda,\rho}\otimes\mathbf{s}_{\mu,\nu})(n)=(a,b)\Leftrightarrow R_{\lambda,\mu}^n(\rho,\nu)\in I_a\times I_b.$$

\begin{example}\label{exa1}
    Take $\beta=3.99+\sqrt{5}/2\simeq 5.108$ and $\gamma=-0.2$. From
    \eqref{eq:r1} and \eqref{eq:r4}, we get
    $\alpha=\beta/(\beta-1)\simeq 1.243$ and
    $\delta=-\beta\gamma/\alpha\simeq 0.821$. We have
    $\mathcal{A}_\alpha=\{1,2\}$ and $\mathcal{A}_\beta=\{5,6\}$. In
    Figure~\ref{fig:t2}, the starting point $(\{\gamma\},\{\delta\})$
    is denoted by $0$ and the arrow represents the application of
    $R_{\alpha,\beta}$, i.e., a translation of
    $(\{\alpha\},\{\beta\})$ in $\mathbb{T}^2$. The first ten iterations of this map are
    labeled in the figure. The torus is split into the four regions
    corresponding to an occurrence of $(1,5)$, $(1,6)$, $(2,5)$ and
    $(2,6)$ respectively. These regions are denoted respectively $a$,
    $b$, $c$ and $d$.
    \begin{figure}[h!tbp]
          \begin{center}
      \includegraphics{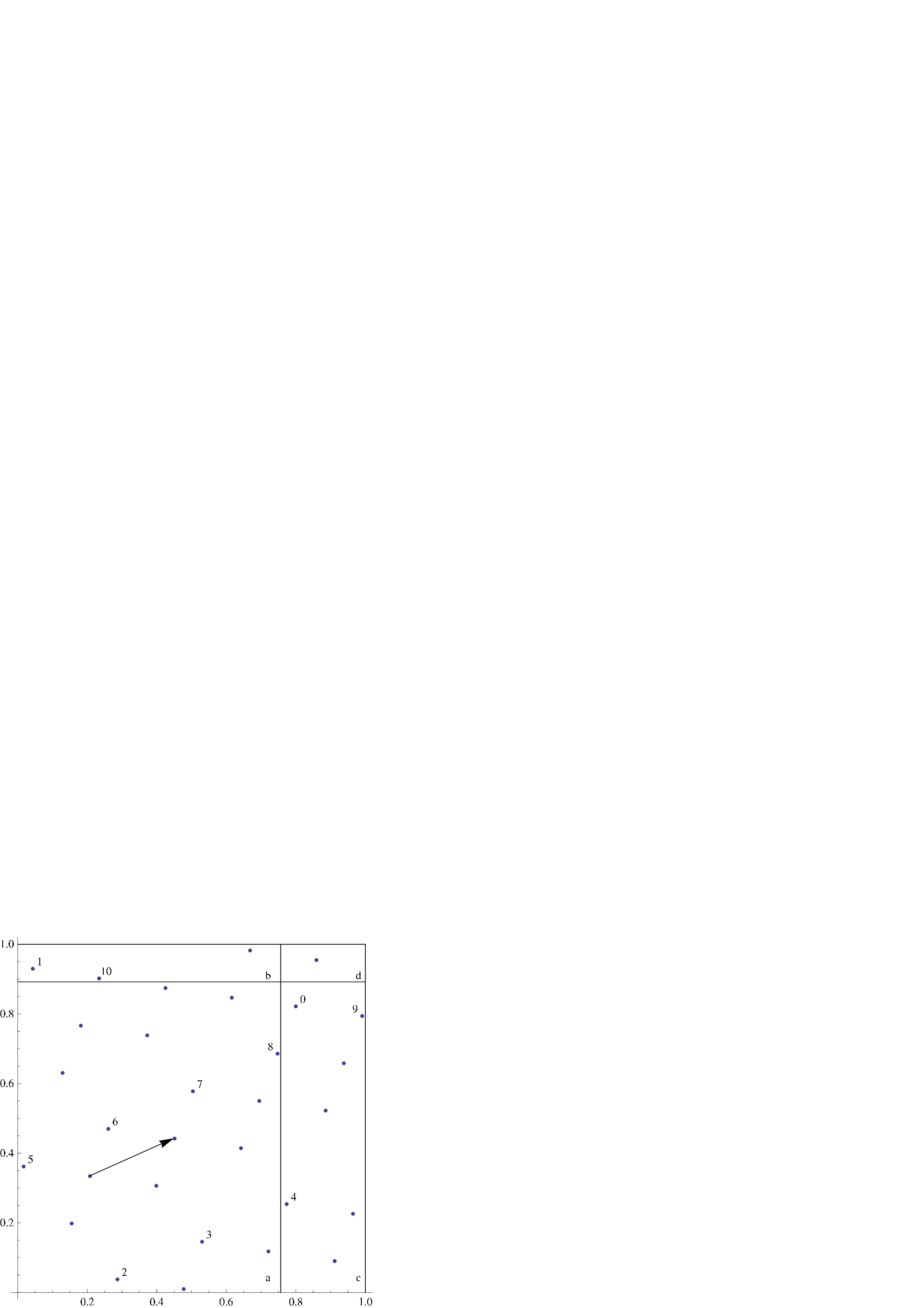}
  \end{center}
        \caption{The torus $\mathbb{T}^2$ split into four regions.}
        \label{fig:t2}
    \end{figure}
    One can notice that the visited regions are: $cbaacaaaacb\cdots$.
    From this, we get
    $\pi_1(\mathbf{s}_{\alpha,\gamma})=21112111121\cdots$ and
      $\pi_2(\mathbf{s}_{\beta,\delta})=565555555556\cdots$.
\end{example}

The next statement is a direct reformulation of~\eqref{eq:Iv}.

\begin{lemma}\label{lem:algo}
    Let $v\in\mathcal{A}_\lambda^+$ and $w\in\mathcal{A}_\mu^+$ be two
  words of the same length. The factor $v\otimes w$ occurs in
  $\mathbf{s}_{\lambda,\rho}\otimes\mathbf{s}_{\mu,\nu}$ in position
  $i$, i.e., $v$ occurs in
  $\mathbf{s}_{\lambda,\rho}$ in position
  $i$ and simultaneously $w$ occurs in
  $\mathbf{s}_{\mu,\nu}$ in position
  $i$, if and only if $R_{\lambda,\mu}^i(\rho,\nu)\in
  I_{v,\lambda}\times I_{w,\mu}$.
\end{lemma}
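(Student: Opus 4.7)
The plan is to reduce the two-dimensional statement to two independent applications of the one-dimensional fact already recalled before~\eqref{eq:Iv} (from \cite[Section 2.1.2]{lot2}), using that the toral rotation $R_{\lambda,\mu}$ factorizes as the product of the one-dimensional rotations $R_\lambda$ and $R_\mu$.

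First I would unfold the definitions. By definition of the direct product, $v\otimes w$ occurs in $\mathbf{s}_{\lambda,\rho}\otimes \mathbf{s}_{\mu,\nu}$ at position $i$ if and only if $v$ occurs in $\mathbf{s}_{\lambda,\rho}$ at position $i$ and simultaneously $w$ occurs in $\mathbf{s}_{\mu,\nu}$ at position $i$ (this is why the lemma requires $|v|=|w|$, so that the projections $\pi_1,\pi_2$ recover $v$ and $w$ respectively).

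Next I would apply the one-dimensional characterization twice: by the statement recalled just after~\eqref{eq:Iv}, $v$ occurs in $\mathbf{s}_{\lambda,\rho}$ at position $i$ iff $R_\lambda^i(\rho)\in I_{v,\lambda}$, and $w$ occurs in $\mathbf{s}_{\mu,\nu}$ at position $i$ iff $R_\mu^i(\nu)\in I_{w,\mu}$. Since $R_{\lambda,\mu}(x,y)=(\{x+\lambda\},\{y+\mu\})=(R_\lambda(x),R_\mu(y))$, a straightforward induction on $i$ gives $R_{\lambda,\mu}^i(\rho,\nu)=(R_\lambda^i(\rho),R_\mu^i(\nu))$. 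The conjunction of the two one-dimensional conditions is therefore equivalent to $R_{\lambda,\mu}^i(\rho,\nu)\in I_{v,\lambda}\times I_{w,\mu}$, which is the desired equivalence.

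There is essentially no obstacle here: the lemma is, as the text announces, a direct reformulation of~\eqref{eq:Iv} in the product setting. The only mildly subtle point is making sure that ``$v\otimes w$ occurs at position $i$'' in the product word is interpreted coordinatewise as $v$ occurring in $\mathbf{s}_{\lambda,\rho}$ at $i$ and $w$ occurring in $\mathbf{s}_{\mu,\nu}$ at $i$; this follows at once from the definition $p_n=(s_n,t_n)$ together with the projection maps $\pi_1,\pi_2$. The rest is bookkeeping combining the two one-dimensional instances of~\eqref{eq:Iv}.
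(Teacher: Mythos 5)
Your argument is correct and is exactly the one the paper intends: the paper gives no explicit proof, stating only that the lemma is a direct reformulation of~\eqref{eq:Iv}, and your write-up simply fills in the routine details (coordinatewise interpretation of occurrence in the direct product, two applications of the one-dimensional criterion, and the factorization $R_{\lambda,\mu}^i(\rho,\nu)=(R_\lambda^i(\rho),R_\mu^i(\nu))$).
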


Let $\ell\ge 1$. Using~\eqref{eq:IH}, again $\mathbb{T}^2$ is split into four regions of
the kind $I_{A,\lambda}(\ell)\times I_{B,\mu}(\ell)$ where
$A,B\in\{L,H\}$ in such a way that the factor of length $\ell$
occurring in $\mathbf{s}_{\lambda,\rho}$ is light and, simultaneously,
the factor of length $\ell$ occurring in $\mathbf{s}_{\mu,\nu}$ is
light, if and only if $R_{\lambda,\mu}^i(\rho,\nu)\in
I_{L,\lambda}(\ell)\times I_{L,\mu}(\ell)$. The other combinations
light/heavy, heavy/light and heavy/heavy are derived accordingly.

\section{A characterization of the $4$-tuples leading to an invariant game}\label{sec:3}

From $\alpha,\beta,\gamma,\delta$, let us define an infinite sequence
$\mathbf{w}=(w_n)_{n\ge 0}$ of pairs taking values in a finite set.
Except maybe for the first symbol, this sequence is the direct product
$\mathbf{s}_{\alpha,\gamma}\otimes\mathbf{s}_{\beta,\delta}$ of two
Sturmian words. Indeed, we have set $A_0=0$ (resp. $B_0=0$) which may
differ from $\lfloor\gamma\rfloor$ (resp. $\lfloor\delta\rfloor$).

For all $n\ge 0$, we set $w_n=(A_{n+1}-A_n,B_{n+1}-B_n)$, that is
$$\mathbf{w}=w_0w_1w_2\cdots=(A_1,B_1)\, (s_{\alpha,\gamma}(1),s_{\beta,\delta}(1))\, (s_{\alpha,\gamma}(2),s_{\beta,\delta}(2))\cdots$$
The purpose of this word $\mathbf{w}$ is the following one. Let $i\le j$. Observe that
\begin{equation}
    \label{eq:sum}
    \sum_{k=i}^j w_i=(A_{j+1}-A_i,B_{j+1}-B_i).
\end{equation}
Referring to the sign of $\gamma$, this word will be denoted by
$\mathbf{w}_+$ (resp. $\mathbf{w}_-$) whenever $\gamma$ is positive
(resp. negative).  Recall that for all $n\ge 0$,
$s_{\alpha,\gamma}(n)\in\{1,2\}$ and $s_{\beta,\delta}(n)\in \{\lfloor
\beta\rfloor, \lfloor \beta\rfloor +1\}$. Since $A_1-A_0=1$, the
following observation is straightforward.

\begin{lemma}\label{lem:ecarts}
For all $n\ge 0$, we have $A_{n+1}-A_{n}\in\{1,2\}$. For all $n\ge 1$, we have
$B_{n+1}-B_{n}\in\{\lfloor\beta\rfloor,\lfloor\beta\rfloor+1\}$
\end{lemma}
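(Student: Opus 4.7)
The plan is to read off both assertions directly from the elementary fact, recalled in Section~\ref{sec:2}, that a lower mechanical word $\mathbf{s}_{\lambda,\rho}$ takes only the two values $\lfloor\lambda\rfloor$ and $\lfloor\lambda\rfloor+1$. Under the standing hypotheses \eqref{eq:r1}--\eqref{eq:r2} one has $1<\alpha<2<\beta$, so $\lfloor\alpha\rfloor=1$ and the two relevant alphabets are exactly $\{1,2\}$ and $\{\lfloor\beta\rfloor,\lfloor\beta\rfloor+1\}$.

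For the $A$-statement I would split according to whether $n=0$ or $n\ge 1$. When $n\ge 1$, both $A_n$ and $A_{n+1}$ are given by the Beatty formula, and hence $A_{n+1}-A_n=\lfloor(n+1)\alpha+\gamma\rfloor-\lfloor n\alpha+\gamma\rfloor=s_{\alpha,\gamma}(n)\in\{1,2\}$. The boundary case $n=0$ must be treated by hand, because $A_0=0$ is imposed by convention and may differ from $\lfloor\gamma\rfloor$; but the assumption $A_1=1$ from \eqref{eq:r2} trivially yields $A_1-A_0=1\in\{1,2\}$.

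For the $B$-statement the index range deliberately starts at $n=1$, and an entirely analogous computation gives $B_{n+1}-B_n=s_{\beta,\delta}(n)\in\{\lfloor\beta\rfloor,\lfloor\beta\rfloor+1\}$ for every such $n$. The case $n=0$ is omitted on purpose: since $B_0=0$ by convention while $B_1\ge 3$, the first jump $B_1-B_0$ is not constrained a priori to lie in the mechanical-word alphabet (numerically one can arrange $\beta$ sizeably larger than~$4$ with compatible data still satisfying $B_1=3$), so it genuinely falls outside the scope of this lemma.

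The statement therefore presents no real obstacle; the only point deserving care is to respect the two initial conventions $A_0=B_0=0$, which make the sequences $(A_n)$ and $(B_n)$ deviate from pure mechanical words at their very first position and thus force the precise index ranges declared in the lemma.
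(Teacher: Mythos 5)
Your argument is correct and is essentially the paper's own: the authors likewise reduce everything to the fact that $s_{\lambda,\rho}(n)\in\{\lfloor\lambda\rfloor,\lfloor\lambda\rfloor+1\}$, handle $n=0$ for the $A$-sequence via the assumption $A_1=1$, and exclude $n=0$ for the $B$-sequence precisely because $B_1-B_0=B_1$ need not lie in $\{\lfloor\beta\rfloor,\lfloor\beta\rfloor+1\}$. Nothing further is needed.
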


\begin{remark}
    Assume that $\gamma<0$. Hence $\lfloor \gamma\rfloor\le -1$ and
    from Remark~\ref{rem2}, $0<\delta<1$, i.e., $\lfloor
    \delta\rfloor=0$. Then $A_1<s_{\alpha,\gamma}(0)=A_1-\lfloor
    \gamma\rfloor$ and $B_1=B_1-\lfloor
    \delta\rfloor=s_{\beta,\delta}(0)$, i.e.,
    $\pi_2(\mathbf{w}_-)=\mathbf{s}_{\beta,\delta}$. In this latter
    case, since $A_1=1$, the word $\mathbf{w}_-$ is defined over the
    alphabet of size $4$
$$\mathcal{A}:=\{(1,\lfloor\beta\rfloor), (1,\lfloor\beta\rfloor+1),
(2,\lfloor\beta\rfloor), (2,\lfloor\beta\rfloor+1)\}$$ which, we
consider to be in one-to-one correspondence with the alphabet
$\{a,b,c,d\}$. We choose $a,b,c,d$ such that $\pi_1(a)=\pi_1(b)=1$,
$\pi_1(c)=\pi_1(d)=2$, $\pi_2(a)=\pi_2(c)=\lfloor\beta\rfloor$ and
$\pi_2(b)=\pi_2(d)=\lfloor\beta\rfloor+1$.
\end{remark}

\begin{example}[For $\gamma<0$]
   We take the same values as in Example~\ref{exa1} and we get the following table.
$${\small
\begin{array}{r|cccccccccccccccccccc}
 & 0 & 1 & 2&3&4&5&6&7&8&9&10&11&12&13&14&15&16&17&18\cr
\hline
A_{n+1}-A_n & 1& 1& 1& 1& 2& 1& 1& 1& 1& 2& 1& 1& 1& 2& 1& 1& 1& 2& 1\cr
B_{n+1}-B_n & 5& 6& 5& 5& 5& 5& 5& 5& 5& 5& 6& 5& 5& 5& 5& 5& 5& 5& 5\cr
\hline
\mathbf{w}_- &a& b& a& a& c& a& a& a& a& c& b& a& a& c& a& a& a& c& a\cr
\end{array}
}$$ Note that $s_{\alpha,\gamma}(0)=2\neq 1$ meaning that the first
sequence in the above table disagrees with
$\mathbf{s}_{\alpha,\gamma}$ for the first term only.  Also, the first
occurrence of $(2,6)=d$ is for $n=30$. Again, $\mathbf{w}_+$ agrees
with the coding of the trajectory described in Example~\ref{exa1},
except for the first symbol.
\end{example}

\begin{remark}
    Assume that $\gamma>0$. In our setting, recall that $\gamma<1$.
    Hence $\lfloor \gamma\rfloor=0$ and we get that
    $A_1=s_{\alpha,\gamma}(0)$, i.e.,
    $\pi_1(\mathbf{w}_+)=\mathbf{s}_{\alpha,\gamma}$. Moreover,
    $\delta<0$ and therefore
    $B_1=\lfloor\beta+\delta\rfloor<s_{\beta,\delta}(0)=\lfloor\beta+\delta\rfloor-\lfloor\delta\rfloor$.
    The word $w_1w_2\cdots$ is thus defined over the alphabet
    $\mathcal{A}$ and $w_0=(1,B_1)$ with $B_1\leq
    \lfloor\beta\rfloor$. So, if $B_1<\lfloor\beta\rfloor$, then
    $\mathbf{w}_+$ is written over an alphabet of size $5$, the symbol
    $(1,B_1)$ appearing only once and in first position (it will be
    denoted by $e$).
\end{remark}

\begin{example}[For $\gamma>0$]
    Take $\beta=4.99+\sqrt{5}/2\simeq 6.108$ and
    $\delta=-1-\sqrt{2}\simeq -2.414$. From \eqref{eq:r1} and
    \eqref{eq:r4}, we get $\alpha=1/(1-1/\beta)\simeq 1.196$ and
    $\gamma=-\alpha\delta/\beta\simeq 0.473$.
$${\small
\begin{array}{r|cccccccccccccccccccc}
 & 0 & 1 & 2&3&4&5&6&7&8&9&10&11&12&13&14&15&16&17&18\cr
\hline
A_{n+1}-A_n &  1 &  1 &  2 &  1 &  1 &  1 &  1 &  2 &  1 &  1 &  1 &  1 &  2 &  1 &  1 &  1 &  1 &  1 &  2 \cr
B_{n+1}-B_n &  3 &  6 &  6 &  7 &  6 &  6 &  6 &  6 &  6 &  6 &  6 &  6 &  6 &  7 &  6 &  6 &  6 &  6 &  6 \cr
\hline
\mathbf{w}_+ &e&a&c&b&a&a&a&c&a&a&a&a&c&b&a&a&a&a&c\\
\end{array}
}$$
Note that $s_{\beta,\delta}(0)=6\neq 3$  meaning that the second sequence in the above table disagrees with $\mathbf{s}_{\beta,\delta}$ for the first term only. Also, the first occurrence of $(2,7)=d$ is for $n=162$.
\end{example}

\begin{example}[For $\gamma>0$]
    Take $\beta=4.99+\sqrt{5}/2\simeq 6.108$ and $\delta=-0.05$. Hence
    $B_1-B_0=\lfloor\beta+\delta\rfloor=6$. Even if $B_1-B_0$ belong
    to $\{\lfloor\beta\rfloor,\lfloor\beta\rfloor+1\}$, this example
    shows that $B_1-B_0\neq
    s_{\beta,\delta}(0)=\lfloor\beta+\delta\rfloor-\lfloor\delta\rfloor=7$.
\end{example}

For the next definition, it is convenient to introduce the {\em Parikh
  vector} of a finite word $w\in\{a,b,c,d\}$. It is defined as
$\Psi(w)=(|w|_a,|w|_b,|w|_c,|w|_d)\in\mathbb{N}^4$.

\begin{definition}\label{def:pbad}
    Let $p=w_0u_1\cdots u_n=w_0u$ be a prefix of length $n+1$
    occurring in $\mathbf{w}_+$.  We say that $p$ is {\em bad} if $u\in\{a,b\}^*$,
    $u_1\cdots u_{n-1}$ is a palindrome and if any of the following
    three situations occur:
     \begin{itemize}
       \item[(i)] $2B_1=\pi_2(u_n)-1$ and there exists $v\in \Fac_{n}(\mathbf{w}_+)$ such that
$|u|_a=|v|_a$, $|u|_b-1=|v|_b$, $|v|_c=1$ or $|u|_a+1=|v|_a$, $|u|_b-2=|v|_b$, $|v|_d=1$, i.e.,
$$\Psi(v)=\Psi(u)+(0,-1,1,0)\text{ or }\Psi(v)=\Psi(u)+(1,-2,0,1)$$
\item[(ii)]  $2B_1=\pi_2(u_n)$ and there exists $v\in \Fac_{n}(\mathbf{w}_+)$ such that
$|u|_a-1=|v|_a$, $|u|_b=|v|_b$, $|v|_c=1$ or $|u|_a=|v|_a$, $|u|_b-1=|v|_b$, $|v|_d=1$, i.e.,
$$\Psi(v)=\Psi(u)+(-1,0,1,0)\text{ or }\Psi(v)=\Psi(u)+(0,-1,0,1)$$
\item[(iii)] $2B_1=\pi_2(u_n)+1$ and there exists $v\in \Fac_{n}(\mathbf{w}_+)$ such that
$|u|_a-1=|v|_a$, $|u|_b=|v|_b$, $|v|_d=1$, or $|u|_a-2=|v|_a$, $|u|_b+1=|v|_b$, $|v|_c=1$, i.e.,
$$\Psi(v)=\Psi(u)+(-1,0,0,1)\text{ or }\Psi(v)=\Psi(u)+(-2,1,1,0).$$
\end{itemize}
\end{definition}

\begin{remark}\label{rem:bad+}
    The latter definition seems rather artificial. But the motivation
    comes from summing up the symbols occurring in such factors. Let $p=w_0u_1\cdots u_n=w_0u$ be a prefix of length $n+1$
    occurring in $\mathbf{w}_+$. First note that
$$\sum_{i=1}^{n}u_i=(A_n-A_1+\pi_1(u_n),B_n-B_1+\pi_2(u_n))$$
Assume that we are in the situation (i) described in Definition~\ref{def:pbad}, then
$$\sum_{i=1}^{n}v_i=\sum_{i=1}^{n}u_i+(1,-1)=(A_n+\pi_1(u_n),B_n+B_1)=(A_{n+1},B_n+B_1).$$
Assume that we are in the situation (ii) described in Definition~\ref{def:pbad}, then
$$\sum_{i=1}^{n}v_i=\sum_{i=1}^{n}u_i+(1,0)=(A_n+\pi_1(u_n),B_n+B_1)=(A_{n+1},B_n+B_1).$$
Assume that we are in the situation (iii) described in Definition~\ref{def:pbad}, then
$$\sum_{i=1}^{n}v_i=\sum_{i=1}^{n}u_i+(1,1)=(A_n+\pi_1(u_n),B_n+B_1)=(A_{n+1},B_n+B_1).$$
\end{remark}

\begin{definition}\label{def:pbad2}
    Let $u=u_1\cdots u_n\in\{a,b\}^*$ be a factor of length $n$ occurring in $\mathbf{w}_-$.
    \begin{itemize}
      \item[(B.1)] If there exists $v\in \Fac_n(\mathbf{w}_-)\cap \{a,b\}^*$
        such that $|v|_b< |u|_b$, i.e.,
$$\Psi(v)=\Psi(u)+(j,-j,0,0), \text{ for some }j>0,$$
then we say that $u$ satisfies property (B.1).
      \item[(B.2)] If there exists $v'\in \Fac_{n-1}(\mathbf{w}_-)$ such that either $|u_1\cdots u_{n-1}|_a= |v'|_a$,
        $|u_1\cdots u_{n-1}|_b-1= |v'|_b$ and $|v'|_c=1$, or $|u_1\cdots u_{n-1}|_a+1= |v'|_a$,
        $|u_1\cdots u_{n-1}|_b-2= |v'|_b$ and $|v'|_d=1$, i.e.,
$$\Psi(v')=\Psi(u_1\cdots u_{n-1})+(0,-1,1,0) \text{ or }\Psi(v')=\Psi(u_1\cdots u_{n-1})+(1,-2,0,1),$$
then we say that $u$ satisfies property (B.2).
\end{itemize}
A factor $u=u_1\cdots u_n\in\{a,b\}^*$ of $\mathbf{w}_-$ is {\em suffix-bad}, if
\begin{itemize}
\item for all $j\in\{1,\ldots,n\}$, the
suffixes $u_j\cdots u_n$ all satisfy property (B.1), or
\item for all $j\in\{1,\ldots,n-1\}$, the
suffixes $u_j\cdots u_n$ all satisfy property (B.2).
\end{itemize}
\end{definition}

\begin{example}
    Assume that $\mathbf{w}_-=aabcdaaabac\cdots$. The prefix $aab$ is
    suffix-bad.  Indeed, the factors $a$, $aa$ and $aaa$ occurring in
    $\mathbf{w}_-$ show that the suffixes $b$, $ab$ and $aab$ satisfy
    (B.1).  Note that the factor $aba$ satisfies (B.2), because its
    prefix of length $2$ contains exactly one $a$ and one $b$, but the
    factor $ac$ contains the same number of $a$'s and one $b$ has been
    replaced with $c$.
\end{example}

Let $u\in \Fac(\mathbf{w}_-)\cap\{a,b\}^*$. Note that, if all suffixes of $u$ satisfy (B.1), then
$u$ ends with $b$.

\begin{definition}\label{def:good}
    A $4$-tuple $(\alpha,\beta,\gamma,\delta)$ is {\em good} if
    \begin{itemize}
      \item $\gamma>0$ and for all prefixes $p=w_0w_1\cdots w_k$ of
        $\mathbf{w}_+$ such that $2\le|p|<B_1$, i.e., $1\le k\le
        B_1-2$, with $w_i\in\{a,b\}$ for $i=1,\ldots,k$, $p$ is not
        bad.
      \item $\gamma<0$ and, for any prefix $p\in\{a,b\}^*$ of $\mathbf{w}_-$ of
        length less than $B_1$, $p$ is not suffix-bad.
    \end{itemize}
\end{definition}

\begin{theorem}\label{the:1}
    If the $4$-tuple $(\alpha,\beta,\gamma,\delta)$ is not good, then
    there does not exist any invariant game having
    $\mathcal{P}$ as set of $P$-positions.
\end{theorem}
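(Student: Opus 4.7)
The plan is to identify, given a non-good 4-tuple, a specific non-$\mathcal{P}$ position $q$ such that for every $P\in\mathcal{P}$ with $P\le q$ componentwise and $P\neq q$, the difference $q-P$ is realized as $P'-P''$ for some $P'\ge P''$ in $\mathcal{P}$. This forces the move set $M$ of any candidate invariant game to simultaneously contain some $q-P$ (in order to drive $q$ to a $P$-position) and avoid all such differences (in order that no $P$-position moves to another $P$-position), a contradiction. We first record the abstract criterion: an invariant game with $P$-positions equal to $\mathcal{P}$ exists if and only if for every $q\in\mathbb{N}^2\setminus\mathcal{P}$ there is $P\in\mathcal{P}$ with $P\le q$, $P\neq q$, such that $q-P$ is not a positive difference of two elements of $\mathcal{P}$; the negation of this criterion is what we need to exhibit.

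For the case $\gamma>0$ with a bad prefix $p=w_0u_1\cdots u_n$ of $\mathbf{w}_+$, we take $q:=(A_{n+1},B_n+B_1)$. Since $B_{n+1}-B_n=\pi_2(u_n)\ge 2B_1-1>B_1$ (using $B_1\ge 3$), one has $B_n<B_n+B_1<B_{n+1}$, so $q$ cannot be of the form $(A_k,B_k)$; complementarity of the two Beatty sequences then also excludes $q$ from $\{(B_k,A_k)\}$, giving $q\notin\mathcal{P}$. Combining Remark~\ref{rem:bad+} with the identity~\eqref{eq:sum}, the factor $v$ supplied by Definition~\ref{def:pbad} occurs at some position $j\ge 1$ in $\mathbf{w}_+$ and satisfies $(A_{j+n},B_{j+n})-(A_j,B_j)=q$, so $q\in\mathcal{P}-\mathcal{P}$, which blocks the direct move from $q$ to $(0,0)$.

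The bulk of the argument is then to show that every remaining $P\le q$ in $\mathcal{P}$ also gives a blocked difference $q-P$. The hypothesis $u_1\cdots u_n\in\{a,b\}^*$ (so that $A_{i+1}-A_i=1$ for $1\le i\le n$) forces $A_{n+1}=n+1$; this restricts the $P$-positions of the form $(B_k,A_k)$ lying below $q$ to small $k$ via $B_k\le n+1$, while those of the form $(A_k,B_k)$ require $k\le n$. For each such $P$, we rewrite $q-P$ as a Parikh sum of a specific sub-factor of the bad prefix; the palindromicity of $u_1\cdots u_{n-1}$ produces a symmetric sub-factor whose Parikh sum, again via~\eqref{eq:sum}, realizes $q-P$ as $(A_m-A_{m'},B_m-B_{m'})$ and exhibits the required dominating pair in $\mathcal{P}$.

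The case $\gamma<0$ is handled in parallel: starting from a suffix-bad prefix of $\mathbf{w}_-$ one constructs an analogous $q$ and blocks each candidate move using either property (B.1) (swap $j$ symbols $b$ for $j$ symbols $a$ in a suffix to produce the matching auxiliary pair of $P$-positions) or property (B.2) (introduce a single symbol $c$ or $d$ in the first $n-1$ letters). The main obstacle we anticipate is the bookkeeping in the blocking step, where one branches on the type of $P$ (of the form $(A_k,B_k)$ or $(B_k,A_k)$) and further splits on which of the three branches (i)--(iii) of Definition~\ref{def:pbad} applies; the palindrome and suffix-bad conditions, together with the length bound $|p|<B_1$, are precisely what keeps this case analysis finite and guarantees that the auxiliary factors actually lie in the factor sets of $\mathbf{w}_+$ and $\mathbf{w}_-$.
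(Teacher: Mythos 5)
Your proposal follows essentially the same route as the paper: you pick the same witness position $(A_{n+1},B_n+B_1)$ for $\gamma>0$ (the paper's $(n,B_{n-1}+B_1)$ after reindexing), block the move to $(0,0)$ via Remark~\ref{rem:bad+}, block moves to $(A_j,B_j)$ via the palindrome symmetry of $u_1\cdots u_{n-1}$, and for $\gamma<0$ use (B.1)/(B.2) exactly as the paper does with its positions $(n,B_n-1)$ and $(n,B_{n-1}-1)$. The details you do supply are correct, so this is the paper's argument in outline form.
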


\begin{proof}
    Assume that $\gamma<0$ and there exists a prefix $p=w_0\cdots
    w_{n-1}\in \{a,b\}^*$ of $\mathbf{w}_-$ of length $n$ less than
    $B_1$ such that $p$ is suffix-bad. Therefore, since
    $\pi_1(a)=\pi_1(b)=1$, the first $n+1$  elements in $\mathcal{P}$
    are $(i,B_i)$ for $i=0,\ldots,n$.

Assume first that
    all the suffixes of $p$ satisfy (B.1). Consider the position
    $(n,B_n-1)$ which is not in $\mathcal{P}$. We will show that from
    $(n,B_n-1)$, there is no allowed move leading to a position in
    $\mathcal{P}$. Proceed by contradiction. Assume that there exists
    $i<n$ such that we can play the move $(n,B_n-1)\to (A_i,B_i)$. It
    implies that $(n-A_i, B_n-B_i-1)$ is not in
    $\mathcal{P}-\mathcal{P}$. By assumption, the suffix $w_i\cdots
    w_{n-1}$ satisfies (B.1). Hence there exists a factor $f=w_{j+i}\cdots
    w_{j+n-1}\in\{a,b\}^*$ in $w$ of length $n-i=n-A_i$ such that
    $|f|_b<|w_i\cdots w_{n-1}|_b$. Since $\pi_1(a)=\pi_1(b)=1$, we get $A_{j+n}-A_{j+i}=\sum_{k=i}^{n-1} \pi_1(w_{j+k})=n-i$.
Note that $\pi_2(f)$ and
    $\pi_2(w_i\cdots w_{n-1})$ are factors of the Sturmian word
    $\mathbf{s}_{\beta,\delta}$. Hence, the balance property implies that
    $|f|_b-1=|w_i\cdots w_{n-1}|_b$. From this, it follows that
$$B_{j+n}-B_{j+i}=\sum_{k=i}^{n-1} \pi_2(w_{j+k})=\sum_{k=i}^{n-1} \pi_2(w_k)-1=B_n-B_i-1$$
and $(A_{j+n}-A_{j+i},B_{j+n}-B_{j+i})=(n-A_i, B_n-B_i-1)$ is in $\mathcal{P}-\mathcal{P}$.

Assume now that all the suffixes of $p$ satisfy (B.2). One can proceed
in a similar way to prove that from the position $(n,B_{n-1}-1)$ which
is not in $\mathcal{P}$, there is no allowed move leading to a
position in $\mathcal{P}$.

Assume that $\gamma>0$ and there exists a prefix $p=w_0w_1\cdots
w_{n-1}$ of $\mathbf{w}_+$ such that $2\le |p|<B_1$, $w_1\cdots
w_{n-1}\in\{a,b\}^*$ and $p$ is bad. Therefore, the first $n+1$
elements in $\mathcal{P}$ are $(i,B_i)$ for $i=0,\ldots,n$. Since $p$
is bad, from Definition~\ref{def:pbad}, $B_n-B_{n-1}=\pi_2(w_{n-1})\in
2B_1+\{-1,0,1\}$ and $B_1\ge 3$. Hence $B_{n-1}+B_1<B_n$ and
$(n,B_{n-1}+B_1)$ is not in $\mathcal{P}$.  We will show that from
this position $(n,B_{n-1}+B_1)$, there is no allowed move leading to a
position in $\mathcal{P}$. We proceed by contradiction. Assume first
that there is a move $(n,B_{n-1}+B_1)\to(0,0)$. Since $p$ is bad,
using Remark~\ref{rem:bad+}, there exists a factor $f=f_1\cdots
f_{n-1}$ occurring in $\mathbf{w}_+$ such that $\sum_{k=1}^{n-1}
f_k=(n,B_{n-1}+B_1)$. In other words, $(n,B_{n-1}+B_1)$ belongs to
$\mathcal{P}-\mathcal{P}$. Now assume that there is a move
$(n,B_{n-1}+B_1)\to(A_j,B_j)$, with $j>0$.  Since $p$ is bad,
$w_j\cdots w_{n-2}$ is the reversal of $w_1\cdots w_{n-j-1}$. Hence we
get
$\sum_{k=j}^{n-2}\pi_2(w_k)+B_1=\sum_{k=1}^{n-j-1}\pi_2(w_k)+\pi_2(w_0)=B_{n-1}+B_1-B_j$
and $(n-j,B_{n-1}+B_1-B_j)$ belongs to $\mathcal{P}-\mathcal{P}$.
\end{proof}

We now turn to the converse. We show that, if the $4$-tuple
$(\alpha,\beta,\gamma,\delta)$ is good, then an invariant game having
$\mathcal{P}$ as set of $P$-positions exists. Moreover, the set of
allowed moves can be chosen to be maximal and taken as
$$\mathcal{M}:=\mathbb{N}^2\setminus(\mathcal{P}-\mathcal{P}).$$
Indeed, in an invariant game, any move can be played from any position
(with the only restriction that there is enough tokens left).
Therefore, a move between two $P$-positions is never allowed.

\begin{lemma}\label{lem4} If the $4$-tuple $(\alpha,\beta,\gamma,\delta)$ is good, then we have
   $$ \{(k,0),(0,k)\mid k\ge 1\}\cup\{(1,k)\mid 1\le k < B_1\}\subseteq\mathcal{M}.$$
\end{lemma}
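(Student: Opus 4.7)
My plan is to establish, for each of the three families of move vectors, that no pair of distinct $P$-positions can have that difference. The analysis naturally splits into three parts.

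For $(k,0)$ and $(0,k)$ with $k\ge 1$, the argument is immediate: by the partition Condition~\eqref{eq:r3}, the first coordinates of the distinct non-zero $P$-positions are exactly the positive integers $\{A_n\}\cup\{B_n\}=\mathbb{N}_{>0}$ with no repetitions, and the same holds for the second coordinates. Hence no non-trivial difference of two $P$-positions can be purely horizontal or purely vertical.

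For $(1,k)$ with $1\le k<B_1$, I would partition the analysis into four subcases according to whether each of $P_1,P_2$ is of type $(A,B)$ or $(B,A)$. The $(B,B)$ subcase fails because, by Lemma~\ref{lem:ecarts}, the $B$-gaps are $\ge\lfloor\beta\rfloor\ge 2$, so $B_n-B_m=1$ is impossible. In the $(A,A)$ subcase, $A_n-A_m=1$ forces $n=m+1$ (since $A$-gaps are $\ge 1$), whence $k=B_{m+1}-B_m$; the case $m=0$ gives $k=B_1$ outside the allowed range, and for $m\ge 1$ we have $k\ge\lfloor\beta\rfloor$, so it suffices to prove $B_1\le\lfloor\beta\rfloor$. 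When $\gamma>0$ this is immediate from Remark~\ref{rem2} ($\delta\le 0$ gives $B_1\le\lfloor\beta\rfloor$). When $\gamma<0$, I would use goodness: if instead $B_1=\lfloor\beta\rfloor+1$, then $w_0=b$, and the length-one prefix $p=b$ of $\mathbf{w}_-$ would satisfy (B.1) with $v=a$ (since $a\in\Fac_1(\mathbf{w}_-)$, as the non-constant Sturmian $\mathbf{s}_{\beta,\delta}$ takes both letters), making $p$ suffix-bad and contradicting the goodness hypothesis.

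The two mixed subcases rely on a partition-counting identity. In the $(A,B)$ subcase one has $A_n=B_m+1$ and $k=B_n-A_m$; counting $A$- and $B$-values in $\{1,\dots,A_n\}$ yields $n+m=A_n$, while the analogous count on $\{1,\dots,B_n\}$ together with $B_n-1\in\{A_j\}$ (since $B$-gaps are $\ge 2$) yields $A_{n'}=B_n-1$ with $n'=B_n-n$. Hence $k=A_{n'}-A_m+1\ge(n'-m)+1=B_n-A_n+1$, and a one-line induction based on Lemma~\ref{lem:ecarts} (using $\lfloor\beta\rfloor\ge 2$) shows $B_n-A_n\ge B_1-1$ for all $n\ge 1$, giving $k\ge B_1$. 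In the $(B,A)$ subcase one has $B_n=A_m+1$; the analogous count gives $m+n=B_n$, and combined with $A_n<B_n=m+n$ this yields $m>A_n-n$ and therefore $B_m>A_n$, so the componentwise inequality $P_1\ge P_2$ already fails. The main technical subtlety I anticipate is the suffix-bad check for a length-one prefix under the convention implicit in the paper, together with the clean bookkeeping of the partition-counting identities and the monotonicity of $B_n-A_n$, where Lemma~\ref{lem:ecarts}, Remark~\ref{rem2}, and the partition property \eqref{eq:r3} all combine.
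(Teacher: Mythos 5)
Your overall decomposition is the same as the paper's: rule out the purely horizontal and vertical differences via the partition property, then split $(1,k)$ according to the types of the two $P$-positions involved. The $(B,B)$ subcase and the two mixed subcases are handled correctly; your partition-counting identities there are more elaborate than the paper's one-line estimate $B_n-A_m=(B_n-B_m)+(B_m-A_m)\ge B_1$, but they are valid.

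There is, however, a genuine error in the $(A,A)$ subcase when $\gamma<0$. You reduce that subcase to the unconditional claim that $B_1\le\lfloor\beta\rfloor$ for every good $4$-tuple with $\gamma<0$, and you justify it by asserting that $a\in\Fac_{\mathbf{w}_-}(1)$ ``since the non-constant Sturmian $\mathbf{s}_{\beta,\delta}$ takes both letters.'' That inference is invalid: the fact that $B_{n+1}-B_n=\lfloor\beta\rfloor$ for some $n$ only tells you that one of the letters $a=(1,\lfloor\beta\rfloor)$ or $c=(2,\lfloor\beta\rfloor)$ occurs at that position; it does not force an occurrence of $a$. Worse, the intermediate claim itself is false: by Lemma~\ref{lem:ajout}, a good $4$-tuple with $\lfloor\beta\rfloor=2$ has $\gamma<0$ and $B_1=3=\lfloor\beta\rfloor+1$, and in that situation the letter $a$ \emph{never} occurs in $\mathbf{w}_-$ --- which is exactly why goodness is not violated. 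The repair is immediate and is what the paper does: you do not need $B_1\le\lfloor\beta\rfloor$; you only need to exclude that $(1,\lfloor\beta\rfloor)=(A_{m+1}-A_m,B_{m+1}-B_m)$ for some $m\ge1$ while $B_1=\lfloor\beta\rfloor+1$. But that contradiction hypothesis says precisely that $w_m=a$, i.e., that $a$ occurs in $\mathbf{w}_-$; then the prefix $w_0=b$ satisfies (B.1) with $v=a$, hence is suffix-bad, contradicting goodness. In short, the occurrence of $a$ must be drawn from the hypothesis being refuted, not asserted unconditionally from the Sturmian structure.
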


\begin{proof}
    Since $\{A_n\mid n\ge 1\}$ and $\{B_n\mid n\ge 1\}$ make a
    partition of $\mathbb{N}$, we clearly have that $(k,0)$ and
    $(0,k)$ belong to $\mathcal{M}$.

    Let $k$ be such that $1\le k < B_1$. By way of contradiction,
    assume that $(1,k)\in \mathcal{P}-\mathcal{P}$.  Note that
    $\delta<1$ (see Remark~\ref{rem2}), implies $B_1=\lfloor\beta+\delta\rfloor \le \lfloor\beta\rfloor+1$.  We consider
    two cases:
    \begin{itemize}
      \item $(1,k)=(A_n-A_m,B_n-B_m)$ for some $n>m\geq 0$.  According
        to Lemma~\ref{lem:ecarts}, $A_n-A_m=1$ implies $n=m+1$ and
        $(A_n-A_m,B_n-B_m)$ can possibly take the values $(1,B_1)$,
        $(1,\lfloor\beta\rfloor)$ and $(1,\lfloor\beta\rfloor+1)$. Since $k<B_1$, if
        $k<\lfloor\beta\rfloor$, then $(1,k)$ cannot be of this form.
        If $k=\lfloor\beta\rfloor$, then $k < B_1$ implies that
        $B_1=\lfloor\beta\rfloor+1$ and thus $\gamma<0$. In other words,
        $b$ is a prefix of $\mathbf{w}_-$. By assumption,
        $(\alpha,\beta,\gamma,\delta)$ is good (and we are in the
        situation where $\gamma<0$), this means that $\mathbf{w}_-$
        contains no occurrence of $a$ (coding the difference
        $(1,\lfloor\beta\rfloor)$) because otherwise, the prefix $b$
        of $\mathbf{w}_-$ would satisfy (B.1).
      \item $(1,k)=(A_n-B_m,B_n-A_m)$ for some $n>m\geq 0$. Hence we
        have $B_n-A_m=(B_n-B_m)+(B_m-A_m)\geq B_1$, contradicting the
        hypothesis.
    \end{itemize}
\end{proof}

\begin{lemma}\label{lem6}
    Let $n,m,k,l,\Delta$ be five integers such that $n>m\geq 0$,
    $k>l\geq 0$, and $\Delta\geq 2$. If $A_n-A_m=A_k-A_l+\Delta$, then
    we have $B_n-B_m>B_k-B_l+\Delta-4$.
\end{lemma}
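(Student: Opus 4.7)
The plan is to work directly with the Beatty-type floor representation of both sequences. Set $p = n - m$ and $q = k - l$. Writing $A_n - A_m = p\alpha - \{n\alpha+\gamma\} + \{m\alpha+\gamma\}$, we see that $A_n - A_m - p\alpha$ lies in the open interval $(-1,1)$, and the same holds for $A_k - A_l - q\alpha$, $B_n - B_m - p\beta$ and $B_k - B_l - q\beta$. So the integer differences $A$ and $B$ track $p\alpha$, $q\alpha$, $p\beta$, $q\beta$ with a controlled error.

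Substituting the two fractional-error expressions into the hypothesis $A_n - A_m = A_k - A_l + \Delta$, the two error terms combine into a quantity that is strictly between $-2$ and $2$. This yields the key intermediate inequality
\[
(p - q)\alpha > \Delta - 2.
\]
The next step is to upgrade this to $(p - q)\beta > \Delta - 2$. By \eqref{eq:r1} one has $\beta = \alpha/(\alpha-1)$, hence $\beta/\alpha = 1/(\alpha - 1) > 1$ since $1 < \alpha < 2$. When $\Delta > 2$, dividing the intermediate inequality by the positive number $\alpha$ and then multiplying by $\beta$ preserves strictness and in fact strengthens it, giving $(p-q)\beta > (\Delta-2)\beta/\alpha > \Delta - 2$. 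When $\Delta = 2$, the inequality $(p-q)\alpha > 0$ forces the integer $p - q$ to be at least $1$, so $(p-q)\beta \ge \beta > 2 > 0 = \Delta - 2$, which is again what we want.

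Finally, the same floor-function decomposition gives
\[
B_n - B_m - (B_k - B_l) = (p-q)\beta + \big(\{m\beta+\delta\} - \{n\beta+\delta\}\big) - \big(\{l\beta+\delta\} - \{k\beta+\delta\}\big),
\]
and the last two parenthesised terms together lie in the open interval $(-2,2)$. Combining with $(p-q)\beta > \Delta - 2$ yields $B_n - B_m - (B_k - B_l) > (\Delta - 2) - 2 = \Delta - 4$, as required. There is really no difficult step here; the only subtlety is bookkeeping so that the two error contributions of total magnitude less than $2$ line up correctly with the constant $4$ in the statement, and that the degenerate case $\Delta = 2$ (where $(p-q)\alpha > 0$ does not by itself give a useful quantitative bound) is handled by integrality of $p-q$.
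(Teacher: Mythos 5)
Your proof is correct and takes a genuinely different route from the paper's. The paper never introduces the index differences $p=n-m$ and $q=k-l$: it compares each point $(A_j,B_j)$ with the line $y=\frac{\beta}{\alpha}x+2\delta$ and uses \eqref{eq:r4} to show $\frac{\beta}{\alpha}A_j+2\delta-B_j\in(-\frac{\beta}{\alpha},1)$, so that $B$-differences are controlled directly by the corresponding $A$-differences; the conclusion then drops out of the single inequality $(\Delta-2)\frac{\beta}{\alpha}\ge 2-\Delta$, which silently covers $\Delta=2$ as well. You instead control $A_j-j\alpha$ and $B_j-j\beta$ separately, extract the intermediate bound $(p-q)\alpha>\Delta-2$, and transfer it to $\beta$ via $\beta/\alpha>1$, with the integrality of $p-q$ handling the degenerate case $\Delta=2$. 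Your version is slightly longer but more self-contained: it uses only that both sequences are Beatty-type over the same index set and that $\beta>\alpha$, and it makes explicit why $\Delta=2$ still works, whereas the paper's argument leans on the specific constant $2\delta$ coming from \eqref{eq:r4} to get a clean error interval. One caveat, which applies verbatim to the paper's own write-up as well: the lemma permits $m=0$ or $l=0$, and $A_0=B_0=0$ is fixed by convention rather than by the floor formula (when $\gamma<0$ one has $\lfloor\gamma\rfloor\le -1\neq A_0$, and when $\delta<0$ one has $\lfloor\delta\rfloor\le -1\neq B_0$), so the claim that $A_n-A_m-p\alpha$ and $B_k-B_l-q\beta$ always lie in $(-1,1)$ requires a separate check when the smaller index is $0$; this is a shared imprecision rather than a defect specific to your argument.
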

\begin{proof}
    Consider the relative position of the straight line of equation
    $y=\frac{\beta}{\alpha}x+2\delta$ and the point of coordinates
    $(A_n,B_n)$. More precisely, consider the difference of
    $y$-coordinates between the point of the line having a
    $x$-coordinate equal to $A_n$ and the point $(A_n,B_n)$:
$$\frac{\beta}{\alpha}A_n+2\delta-B_n=\frac{\beta}{\alpha}\lfloor n\alpha+\gamma\rfloor+2\delta-\lfloor n\beta+\delta\rfloor.$$
This quantity is equal to
$$\frac{\beta}{\alpha}(n\alpha+\gamma)+2\delta-(n\beta+\delta)-\frac{\beta}{\alpha}\{n\alpha+\gamma\}+\{n\beta+\delta\}=-
\frac{\beta}{\alpha}\{n\alpha+\gamma\}+\{n\beta+\delta\}$$ where, for
the last equality, we have used \eqref{eq:r4}. Hence, this difference
satisfies
$$\frac{\beta}{\alpha}A_n+2\delta-B_n\in\left(-\frac{\beta}{\alpha},1\right), \ i.e.,\  \frac{\beta}{\alpha}A_n+2\delta-1 <B_n<\frac{\beta}{\alpha}A_n+2\delta +\frac{\beta}{\alpha}.$$

Assume that $A_k-A_l=i$. From the above computation, we get
$$B_k-B_l<\frac{\beta}{\alpha}A_k+2\delta+\frac{\beta}{\alpha} -(\frac{\beta}{\alpha}A_l+2\delta-1)=(i+1) \frac{\beta}{\alpha}+1.$$
Similarly, by assumption $A_n-A_m=i+\Delta$ and we get
$$B_n-B_m>\frac{\beta}{\alpha}A_n+2\delta-1-(\frac{\beta}{\alpha}A_m+2\delta+\frac{\beta}{\alpha})=(i+\Delta-1)\frac{\beta}{\alpha}-1.$$
To get the conclusion, we have to show that
$$(i+\Delta-1)\frac{\beta}{\alpha}-1\ge (i+1) \frac{\beta}{\alpha}+1-\Delta$$
which is equivalent to $\Delta-2\ge (2-\Delta)\alpha/\beta$. Observe that this last inequality trivially holds true for $\Delta\ge 2$.
\end{proof}

\begin{remark}\label{rem:ajout}
    The map $n\mapsto B_n-A_n$ is non-decreasing. Indeed, since $A_{n+1}-A_n\in\{1,2\}$, $B_{n+1}-B_n\in\{\lfloor\beta\rfloor,\lfloor\beta\rfloor+1\}$, then $B_{n+1}-A_{n+1}\ge B_{n}-A_{n}+\lfloor\beta\rfloor-2$ and the conclusion follows from the fact that $\lfloor\beta\rfloor\ge 2$.
\end{remark}

\begin{lemma}\label{lem:ajout}
	If the $4$-tuple $(\alpha,\beta,\gamma,\delta)$ is good and $\lfloor \beta \rfloor =2$, then the following properties hold:
	\begin{enumerate}
	\item $B_1=3$.
	\item $\gamma <0 $.
	\item $\mathbf{w}_{-}$ starts with $bb$. The factor $a$ never appears in $\mathbf{w}_{-}$. 
	\item The first three pairs of the sequence $(A_n,B_n)$ are $(0,0)$, $(1,3)$, $(2,6)$.
    \item The factor $cc$ never occurs in $\mathbf{w}_{-}$.
	\end{enumerate}
\end{lemma}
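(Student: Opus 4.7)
The plan is to dispatch the five items in order, using $\lfloor\beta\rfloor=2$ (so $\beta<3$), Remark~\ref{rem2}, the partition condition~\eqref{eq:r3}, and the goodness hypothesis. For (1) and (2), I combine $B_1=\lfloor\beta+\delta\rfloor\ge 3$ with $\beta<3$ to force $\delta>0$; Remark~\ref{rem2} then shows $\delta>0$ is compatible only with $\gamma<0$ and $\delta<1$. Combining $\delta<1$ with $\beta<3$ gives $\beta+\delta<4$, hence $B_1\le 3$, so $B_1=3$.

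For (3), $(A_1-A_0,B_1-B_0)=(1,3)=(1,\lfloor\beta\rfloor+1)$, so $w_0=b$. This length-one prefix has length less than $B_1=3$, so by goodness it is not suffix-bad; at length one only clause (B.1) of Definition~\ref{def:pbad2} is operative, which would require a factor $v\in\Fac_1(\mathbf{w}_-)\cap\{a,b\}^*$ with $|v|_b<1$, namely $v=a$. Hence $a$ never occurs in $\mathbf{w}_-$. For $w_1$, having $\pi_1(w_1)=2$ would give $A_2=3=B_1$, contradicting \eqref{eq:r3}; so $\pi_1(w_1)=1$, i.e.\ $w_1\in\{a,b\}$, and since $a$ is forbidden, $w_1=b$. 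Item (4) then reads off directly: $(A_0,B_0)=(0,0)$, $(A_1,B_1)=(1,3)$, and $w_1=b$ gives $(A_2,B_2)=(2,6)$.

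For (5), the prefix $bb$ of $\mathbf{w}_-$ projects under $\pi_2$ to $33$, a prefix of the Sturmian word $\mathbf{s}_{\beta,\delta}$ over $\{2,3\}$. By the balance property, the occurrence of $33$ forces the density of the letter $3$ to exceed $1/2$, i.e.\ $\{\beta\}>1/2$, equivalently $\beta>5/2$. Then $\lceil 2\{\beta\}\rceil=2$, so every length-two factor of $\mathbf{s}_{\beta,\delta}$ contains at least one $3$; in particular $22$ is not a factor. Since $\pi_2(cc)=22$, the factor $cc$ cannot occur in $\mathbf{w}_-$.

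The only delicate point is the corner case of Definition~\ref{def:pbad2} at $n=1$, where the universal quantifier over $j\in\{1,\ldots,n-1\}$ in the (B.2) clause is vacuous. I read this in line with the proof of Theorem~\ref{the:1}, which invokes the position $(n,B_{n-1}-1)$ and is only meaningful for $n\ge 2$; with that convention, only clause (B.1) acts on the length-one prefix $b$ and all the steps above go through cleanly.
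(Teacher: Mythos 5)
Your proof is correct and follows essentially the same route as the paper's: derive $\delta>0$ and hence $\gamma<0$ and $B_1=3$ from $\lfloor\beta\rfloor=2$ together with Remark~\ref{rem2}, use goodness of the length-one prefix $b$ to exclude the letter $a$ and force $w_1=b$, and rule out $cc$ by the balance property of the Sturmian word $\pi_2(\mathbf{w}_-)$. Your explicit handling of the vacuous $(B.2)$ clause for the length-one prefix is a point the paper passes over silently, and your resolution matches the reading the paper implicitly uses.
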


\begin{proof}
\begin{enumerate}
\item Obtained from $B_1\geq 3$ and $\delta<1$.
\item Since $B_1>\lfloor \beta \rfloor$, we have $\delta>0$, and thus $\gamma<0$.
\item Since $(A_1,B_1)=(1,3)$, it means that $w_0=b$ and we necessarily have $A_2=2$. Since $(\alpha,\beta,\gamma,\delta)$ is good, the prefix $b$ is not suffix-bad, meaning that $a$ never appears in $\mathbf{w}_{-}$. In other words, $w_1=b$.
\item Directly deduced from the previous item.
\item Since $bb$ occurs in $\mathbf{w}_{-}$ and $\pi_2(\mathbf{w}_{-})$ is Sturmian, it means that $cc$ never appears.
\end{enumerate}
\end{proof}

\begin{theorem}\label{the:principal}
    If the $4$-tuple $(\alpha,\beta,\gamma,\delta)$ is good, then the
    invariant game having $\mathcal{M}$ as set of moves
    admits $\mathcal{P}$ as set of $P$-positions.
\end{theorem}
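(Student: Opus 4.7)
My plan is to verify the two standard conditions for $\mathcal{P}$ to be the set of $P$-positions of the game with move-set $\mathcal{M}$: \textbf{(i)} no move in $\mathcal{M}$ joins two $\mathcal{P}$-positions, and \textbf{(ii)} from every position outside $\mathcal{P}$, some move in $\mathcal{M}$ leads into $\mathcal{P}$. Condition (i) is automatic since $\mathcal{M}=\mathbb{N}^2\setminus(\mathcal{P}-\mathcal{P})$, so the entire work consists in proving (ii).

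For (ii), the symmetry $\mathcal{P}=\{(y,x):(x,y)\in\mathcal{P}\}$ lets me restrict to non-$\mathcal{P}$ positions $(x,y)$ with $x\le y$. Since $\{A_n\}$ and $\{B_n\}$ partition $\mathbb{N}_{>0}$, I split according to the value of $x$. The easy subcases are handled by Lemma~\ref{lem4}: when $x=0$, move to $(0,0)$ via $(0,y)$; when $x=1$ and $y<B_1$, move to $(0,0)$ via $(1,y)$; when $x=B_n\le y$, move to $(B_n,A_n)$ via $(0,y-A_n)$; and when $x=A_n$ with $y>B_n$, move to $(A_n,B_n)$ via $(0,y-B_n)$.

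The genuinely hard case is $x=A_n$ with $n\ge 2$ and $A_n\le y<B_n$. Here I would pick the maximal index $m<n$ with $B_m\le y$ and attempt to move to $(A_m,B_m)\in\mathcal{P}$ via the vector $(A_n-A_m,y-B_m)$; membership of this vector in $\mathcal{M}$ is then what must be verified. Arguing by contradiction, if this vector coincided with some $\mathcal{P}$-$\mathcal{P}$ difference $(A_j-A_i,B_j-B_i)$ or $(B_j-A_i,A_j-B_i)$, one would extract a factor $f$ of $\mathbf{w}_{\pm}$ whose Parikh vector satisfies one of the forbidden equalities appearing in Definitions~\ref{def:pbad} and~\ref{def:pbad2}. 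Lemma~\ref{lem6} constrains the gap $\Delta=B_n-y$ so that, for moderate $A_n-A_m$, only finitely many Parikh shapes of $f$ are possible; the balance property (Remark~\ref{rem:heavy-light}) further restricts these shapes to the precise patterns tagged as bad or suffix-bad. The goodness hypothesis then produces the contradiction. The two sign regimes $\gamma>0$ and $\gamma<0$ are treated in parallel but with distinct alphabets and distinct notions of badness. Residual corner cases, in which no admissible target $(A_m,B_m)$ exists (for instance $y<B_1$, forcing $m=0$), are resolved by retargeting to some $(B_j,A_j)$ with $B_j\le A_n$ and $A_j\le y$; Remark~\ref{rem:ajout} ensures existence via monotonicity of $n\mapsto B_n-A_n$, and Lemma~\ref{lem:ajout} covers the delicate boundary $\lfloor\beta\rfloor=2$.

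The principal obstacle is the combinatorial bookkeeping in this hard case: one must systematically translate the putative $\mathcal{P}$-$\mathcal{P}$ equality of moves into the exact Parikh-vector equations of Definitions~\ref{def:pbad}--\ref{def:pbad2}, thereby producing a bad or suffix-bad prefix and contradicting goodness. The logic is essentially dual to the proof of Theorem~\ref{the:1}: there a bad prefix was used to trap a non-$\mathcal{P}$ position with no legal escape, whereas here the absence of bad prefixes is used to show that no non-$\mathcal{P}$ position is trapped. Modulo this translation, the remaining verifications are mechanical arithmetic on Beatty sequences.
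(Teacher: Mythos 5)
Your skeleton (conditions (i)--(ii), the easy cases via Lemma~\ref{lem4}, and the general idea of converting a forbidden move into a Parikh-vector identity) matches the paper, but the core of the hard case $x=A_i\le y<B_i$ has a genuine gap. Your single candidate move --- to $(A_m,B_m)$ with $m<i$ maximal such that $B_m\le y$ --- is provably forbidden in concrete subcases: for instance when $\gamma>0$ and $y=B_{i-1}+B_1$, that move is $(1,B_1)=(A_1-A_0,B_1-B_0)\in\mathcal{P}-\mathcal{P}$. The paper must then retarget to $(0,0)$ or to $(A_{i-j-2},B_{i-j-2})$ for a $j$ located via a palindrome analysis of the prefix, and this is precisely where Definition~\ref{def:pbad} enters; your fallback of ``retargeting to some $(B_j,A_j)$'' is neither verified to lie in $\mathcal{M}$ nor what the argument actually requires.

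Second, and more structurally: goodness only constrains prefixes of $\mathbf{w}_{\pm}$ of length less than $B_1$, so it says nothing when $x=A_i>B_1$, and your claim that ``the goodness hypothesis then produces the contradiction'' cannot work in that regime. There (the paper's Case~3.1) the paper instead offers \emph{two} candidate moves, to $(1,B_1)$ and to $(B_1,1)$, and uses Lemma~\ref{lem6} with $\Delta=B_1-1$ or $\Delta=y-x$ to show they cannot both lie in $\mathcal{P}-\mathcal{P}$, with a separate treatment of $y-x<2$ (move to $(0,0)$) relying on Lemma~\ref{lem:ajout} when $\lfloor\beta\rfloor=2$. This ``at least one of two moves survives'' mechanism, together with the sharp split between the regimes $A_i>B_1$ (handled by Lemma~\ref{lem6}) and $A_i<B_1$ (handled by goodness, with sub-cases according to the position of $y$ relative to $B_{i-2}$, $B_{i-1}$, $B_i$ and to whether the relevant gap is $\lfloor\beta\rfloor$ or $\lfloor\beta\rfloor+1$), constitutes the substance of the proof and is absent from your proposal.
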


\begin{proof}
    By construction of $\mathcal{M}$, it is clear that from a position
    in $\mathcal{P}$ any move leads to a position in
    $\mathcal{N}=\mathbb{N}^2\setminus\mathcal{P}$.  Now we show that
    if $(x,y)$ belongs to $\mathcal{N}$, there exists a move
    $m\in\mathcal{M}$ such that $(x,y)-m$ belongs to $\mathcal{P}$. If
    $x=0$ or $y=0$, we conclude directly using Lemma~\ref{lem4}.
    Without loss of generality, we now may assume that $0<x\le y$.

    Since $\{A_n\mid n\ge 1\}$ and $\{B_n\mid n\ge 1\}$ make a
    partition of $\mathbb{N}$, we consider three cases.

\smallskip
\noindent {\bf Case 1)} If $x=B_i$ for some $i> 0$, then consider the
move $(0,y-A_i)$.  From Lemma~\ref{lem4}, this move belongs to
$\mathcal{M}$ since $y\ge x= B_i >A_i$.  Hence the resulting position
$(x,y)-(0,y-A_i)=(B_i,A_i)$ belongs to $\mathcal{P}$.

\smallskip
\noindent {\bf Case 2)} If $x=A_i$ and $y>B_i$, then consider the move
$(0,y-B_i)\in\mathcal{M}$ leading to the position $(A_i,B_i)$.

\smallskip
\noindent {\bf Case 3)} Consider the case where $x=A_i\le y<B_i$. Note
that we do not take into account the case $y=B_i$, since we would have
$(x,y)=(A_i,B_i)$ which does not belong to $\mathcal{N}$.  We consider
two sub-cases:

\smallskip
\noindent {\bf Case 3.1)} $B_1<x=A_i\le y<B_i$. 

\smallskip
\noindent
{\bf 3.1.a)} If $y-x \geq 2$, we aim to show that it
is always possible to move either to $(1,B_1)$ or to $(B_1,1)$. Since the move should not belong to $\mathcal{P}-\mathcal{P}$, we have three situations that may occur.

{\em Situation 1}. We
start by proving that these two moves are not of the form
$(A_n-B_m,B_n-A_m)$ with $n>m>0$. First assume that $(A_i-1,y-B_1)=(A_n-B_m,B_n-A_m)$ for some $n>m>0$.
Hence we have $$A_i-1=A_n-B_m\quad \text{ and }\quad y-B_1=B_n-A_m.$$
In that case, since $A_n-A_i=B_m-1\geq 0$, note that we necessarily
have $i\leq n$.  We now subtract the previous two equalities to obtain
\[
y=B_n-A_n+B_m-A_m+A_i-1+B_1
\]
And since $y<B_i$, the following inequality holds
\[
B_n-A_n+B_m-A_m<B_i-A_i+A_1-B_1
\]
Since $n\geq i$, the contradiction is guaranteed thanks to Remark~\ref{rem:ajout} and $A_1-B_1<0$.
		
Now suppose that $(A_i-B_1,y-1)=(A_n-B_m,B_n-A_m)$ for some $n>m>0$.
With the same argument as above, we get
\[
B_n-A_n+B_m-A_m<B_i-A_i+B_1-A_1
\]
Since we still have $n\geq i$ in that case ($A_n-A_i$ being positive),
and $m\geq 1$, the contradiction is ensured thanks to Remark~\ref{rem:ajout}.

{\em Situation 2}. Now assume that $(A_i-1,y-B_1)=(B_n-A_m,A_n-B_m)$ for some $n>m>0$. Again, subtracting the two equalities, we get
$$y-A_i=B_1-1-(B_m-A_m)-(B_n-A_n).$$
The left hand side is non-negative, but thanks to Remark~\ref{rem:ajout}, the right hand side is negative.
Similarly, if $(A_i-B_1,y-1)=(B_n-A_m,A_n-B_m)$ for some $n>m>0$, we get directly $y-A_i=1-B_1-(B_m-A_m)-(B_n-A_n)$. This is a contradiction because the l.h.s. is non-negative and the r.h.s. is negative.

{\em Situation 3}. It now remains to prove that at least one of the two moves is neither of
the form $(A_n-A_m,B_n-B_m)$, nor $(B_n-B_m,A_n-A_m)$ with $n>m\geq 0$.

Case 1: $A_i-1\leq y-B_1$. From Lemma \ref{lem:ecarts}, $B_n-B_m\ge (n-m)\lfloor\beta\rfloor$ and $A_n-A_m\le 2(n-m)$. Hence 
$(A_i-1,y-B_1)$ cannot be of the form $(B_n-B_m,A_n-A_m)$, except in the particular case where $A_i-1=y-B_1$ and $B_n-B_m=A_n-A_m$. Again from Lemma \ref{lem:ecarts}, this may only happen if $\lfloor\beta\rfloor=2$, together with $A_n-A_m=2(n-m)$ and $B_n-B_m= (n-m)\lfloor\beta\rfloor$. But from Lemma \ref{lem:ajout}, the factor $cc$ never occurs in $\mathbf{w}_{-}$ in that case, implying $n-m=1$, then $A_i=3$, contradicting $A_i>B_1$.

Hence $(A_i-1,y-B_1)$ can only be of the form $(A_n-A_m,B_n-B_m)$. If it is the case, we will show that the move $(A_i-B_1,y-1)$ is not forbidden. By way of contradiction, assume that both moves are forbidden, i.e., there exists $n>m\geq 0$ and $k>l\geq 0$
satisfying
$$x-1=A_n-A_m\quad \text{ and }\quad y-B_1=B_n-B_m,$$
$$x-B_1=A_k-A_l\quad \text{ and }\quad y-1=B_k-B_l.$$

Indeed, since $A_i-B_1<y-1$, then $(A_i-B_1,y-1)$ cannot be of the form $(B_k-B_l,A_k-A_l)$ because $B_k-B_l\ge A_k-A_l$. Moreover, the other forms $(A_k-B_l,B_k-A_l)$ or $(B_k-A_l,A_k-B_l)$ are also excluded thanks to the above discussion in situations 1 and 2.
Thus, if $(A_i-B_1,y-1)$ is forbidden, it can only be of the form $(A_k-A_l,B_k-B_l)$ .

Now, by subtracting the last two equalities from the first two ones, we
obtain
$$A_n-A_m=A_k-A_l+B_1-1\quad \text{ and }\quad
B_n-B_m=B_k-B_l-(B_1-1).$$ Since $B_1-1\geq 2$, Lemma~\ref{lem6} with
$\Delta=B_1-1$ yields a contradiction.

Case 2: $A_i-1> y-B_1$. Hence $(A_i-1,y-B_1)$ can only be of the form $(B_n-B_m,A_n-A_m)$. Similarly, $(A_i-B_1,y-1)$ satisfies $A_i-B_1<y-1$, and thus can only be of the form $(A_n-A_m,B_n-B_m)$. Now assume that both moves are forbidden, i.e., there exists $n>m\geq 0$ and $k>l\geq 0$
satisfying
$$x-1=B_n-B_m\quad \text{ and }\quad y-B_1=A_n-A_m,$$
$$y-1=B_k-B_l \quad \text{ and }\quad x-B_1=A_k-A_l.$$

By subtracting the last two equalities from the first two ones, we
obtain
$$A_n-A_m=A_k-A_l+y-x\quad \text{ and }\quad
B_n-B_m=B_k-B_l-(y-x).$$ Since $y-x\geq 2$, Lemma~\ref{lem6} with
$\Delta=y-x$ yields a contradiction.

\smallskip
\noindent
{\bf 3.1.b)} If $y-x<2$, we will show that playing from $(x,y)=(A_i,y)$ to $(0,0)$ is ``almost always'' legal. Indeed, if this move was forbidden, there would exist a factor $f_1\cdots f_l$ in $\mathbf{w}$, of length $l>1$ (since $A_i>B_1\geq 3$), and satisfying $y-x=\sum_{k=1}^{l}\pi_2(f_k)-\sum_{k=1}^{l}\pi_1(f_k)<2$. According to Lemma \ref{lem:ecarts}, the only factors that may satisfy these conditions need to have $\lfloor \beta \rfloor =2$ and are:
\begin{itemize}
\item factors of the kind $cc^+$: According to Lemma \ref{lem:ajout}, $cc$ never occur in $\mathbf{w}_{-}$, which excludes such factors.
\item factors of the kind $ac^+$ and their permutations: According to Lemma \ref{lem:ajout}, these factors never occur because they contain $a$.
\item factors of the kind $dc^+$ and their permutations: According to Lemma \ref{lem:ajout}, since $cc$ never occur, the list reduces to the factors $\{dc,cd,cdc\}$. The case $cdc$ would lead to study $(A_i,A_i+1)=(6,7)$, which is impossible since $B_2=6$ from Lemma \ref{lem:ecarts}. The cases $cd$ and $dc$ lead to examinate position $(A_i,A_i+1)=(4,5)$. For this particular case, play $(4,5)\rightarrow (3,1)$, which is legal since $(1,4)=(1,\lfloor \beta \rfloor+2)$ is clearly not in $\mathcal{P}-\mathcal{P}$ from Lemma \ref{lem:ecarts}.
\end{itemize}

\smallskip
\noindent {\bf Case 3.2)} $x=A_i\le y<B_i$ and $A_i<B_1$. If $i=1$,
then $(x,y)=(1,y)$ and thanks to Lemma~\ref{lem4}, playing to $(0,0)$
is allowed.  Hence in the following discussion, we may assume that
$i>1$.  Since $A_i<B_1$ and the increasing sequences $(A_n)_{n\ge 1}$
and $(B_n)_{n\geq 1}$ make a partition of $\mathbb{N}_{>0}$, we have
$A_{i'}=i'$ for $0<i'\leq i$.

If $y=B_j$ for some $j<i$, playing to $(A_j,B_j)$ is allowed from Lemma~\ref{lem4}.

\smallskip
\noindent
{\bf 3.2.a)}
If $y< B_{i-2}$, there exists $t\ge 3$ such that $B_{i-t}<y<B_{i-t+1}$
and $i\ge 3$. Playing to $(A_{i-t},B_{i-t})$ is allowed. Indeed, we have
$$(x,y)-(A_{i-t},B_{i-t})=(t,k) \text{ where }t\ge 3, k\le\lfloor\beta\rfloor$$
and the conclusion follows from Lemma~\ref{lem:ecarts}: $(t,k)$  with $t\ge 3, k\le\lfloor\beta\rfloor$ is not of the form $(A_n-A_m,B_n-B_m)$. In addition, it is easy to see that $(t,k)$ can neither be of the form $(A_n-B_m,B_n-A_m)$ (otherwise we would have $k=B_n-A_m\geq \lfloor\beta\rfloor +1$), nor of the form $(B_n-A_m,A_n-B_m)$ (otherwise we would have $t=B_n-A_m\geq B_1$, contradicting the property $t=A_i-A_{i-t}<B_1$), nor $(B_n-B_m,A_n-A_m)$ (otherwise we would have $B_n-B_m=t<i=A_i<B_1$, implying $B_n-B_m < \lfloor\beta\rfloor$ !)

\smallskip
\noindent
{\bf 3.2.b)} If $B_{i-2}<y<B_{i-1}-1$ or if $y=B_{i-1}-1$ and $B_{i-1}-B_{i-2}=\lfloor\beta\rfloor$, then playing to $(A_{i-2},B_{i-2})$ is allowed. Indeed, we have
$$(x,y)-(A_{i-2},B_{i-2})=(2,k) \text{ where }k<\lfloor\beta\rfloor$$
and, as in the previous case, the conclusion follows again from Lemma~\ref{lem:ecarts}.

\smallskip
\noindent {\bf 3.2.c)} If $y=B_{i-1}-1$ and
$B_{i-1}-B_{i-2}=\lfloor\beta\rfloor+1$, we consider two cases
according to the sign of $\gamma$.
\begin{itemize}
  \item[$\gamma>0$:] We will show that moving to $(0,0)$ is always
    possible. For this purpose, it suffices to show that there exists
    no factor $f=f_1\cdots f_l$ of $\mathbf{w}_+$ such that
    $\sum_{k=1}^{l}f_k=(i,B_{i-1}-1)$. Assume that such a factor $f$
    exists. In other words, $(i,B_{i-1}-1)$ is of the kind\footnote{In
      all what follows, the case $(A_n-B_m,B_n-A_m)$ is left to the
      reader. Note that we do not have to consider moves $(B_n-B_m,A_n-A_m)$ nor $(B_n-A_m,A_n-B_m)$ because $B_n-B_m\ge A_n-A_m$ and $B_n-A_m\ge A_n-B_m$ but we are in a position $(i,y)$ with $y<B_{i}$. To avoid lengthy discussions, we have only considered the more intricate situation.}
$(A_n-A_{n-l},B_n-B_{n-l})$ and belongs to
    $\mathcal{P}-\mathcal{P}$.  Necessarily the length of $f$
    satisfies $l\leq i$ (because $\pi_1(f)\in\{1,2\}^*$).

    If $l=i$, since $s_{\beta,\delta}(0)w_1w_2\cdots$ is a Sturmian
    word, one can use Remark~\ref{rem:heavy-light} and the fact that
    $s_{\beta,\delta}(0)>\pi_2(w_0)$, to get
    $$B_{i-1}-1=\sum_{k=1}^{l}\pi_2(f_k)\geq \sum_{k=0}^{i-1}\pi_2(w_k).$$  As
    \eqref{eq:sum} gives
    $\sum_{k=0}^{i-1}\pi_2(w_k)=B_{i-1}+\pi_2(w_{i-1})$, we get a
    contradiction.

    If $l=i-1$, we conclude in the same way that
    $\sum_{k=1}^{l}\pi_2(f_k)\geq \sum_{k=0}^{i-2}\pi_2(w_k)=B_{i-1}$,
    a contradiction.

    If $l<i-1$, then using Remark~\ref{rem:heavy-light}, we get
$$\sum_{k=1}^{l}\pi_2(f_k)\le \sum_{k=0}^{i-3}\pi_2(s_{\beta,\delta}(k))+1=\pi_2(w_0)-\lfloor\delta\rfloor+\sum_{k=1}^{i-3}\pi_2(s_{\beta,\delta}(k))+1\le B_{i-2}+2$$
where for the last inequality, we have used the fact that
$\lfloor\delta\rfloor\le -1$. Recall that we are assuming here that
$B_{i-1}-B_{i-2}=\lfloor\beta\rfloor+1$ (but we also have
$\lfloor\beta\rfloor\geq 3$ since $\delta<0$ and $B_1\geq 3$). Hence,
$\sum_{k=1}^{l}\pi_2(f_k)\le B_{i-2}+2<B_{i-1}-1$ which is again a
contradiction.


  \item[$\gamma<0$:] Consider the prefix $p$ of $\mathbf{w}_-$ of
    length $i$. Since $A_{j}-A_{j-1}=1$ for all $0<j\leq i$, we know
    that $p\in\{a,b\}^*$ and $|p|<B_1$. Since
    $(\alpha,\beta,\gamma,\delta)$ is good, the prefix $p$ is not
    suffix-bad. In particular, it means that there exists some
    $j\in\{0,\ldots,i-2\}$ such that $w_j\cdots w_{i-1}$ does not
    satisfy property (B.2). Playing from $(x,y)$ to $(A_{j},B_{j})$
    is thus allowed. Indeed, assume on the contrary that
    $(x-A_{j},y-B_{j})$ belongs to $\mathcal{P-P}$ and is of the form
    $(A_n-A_m,B_n-B_m)$ (the other case $(A_m-B_n,B_m-A_n)$ cannot
    occur). It would mean that there exists a factor $f=f_1\cdots f_l$
    of $\mathbf{w}_-$ such that
    $\sum_{k=1}^{l}f_k=(i-j,B_{i-1}-1-B_j)$. Necessarily we have
    $l\leq i-j$.  If $l=i-j$, then $\sum_{k=1}^{l}\pi_2(f_k)\geq
    \sum_{k=j}^{i-1}\pi_2(w_k)-1$, since $\pi_2(\mathbf{w}_-)$ is
    Sturmian. But we also have
    $\sum_{k=1}^{l}\pi_2(f_k)=B_{i-1}-1-B_j=
    \sum_{k=j}^{i-1}\pi_2(w_k)-1$, leading to a contradiction.  If
    $l<i-j-1$, then $\sum_{k=1}^{l}\pi_2(f_k)\leq
    \sum_{k=j}^{i-3}\pi_2(w_k)+1$. But we also have
    $\sum_{k=1}^{l}\pi_2(f_k)=B_{i-1}-1-B_j=
    \sum_{k=j}^{i-1}\pi_2(w_k)-1$, leading to a contradiction.  Hence
    $l=i-j-1$, meaning that $f$ is a factor of $\mathbf{w}_-$ over
    $\{a,b,c,d\}^*$ of length $(i-j-1)$ with exactly $|f|_c+|f|_d=1$.
    In particular, $\sum_{k=1}^{l}\pi_1(f_k)=(i-j)$.  Assume first
    that $|f|_c=1$. Since $w_j\cdots w_{i-1}$ does not satisfy
    property (B.2), we get $|f|_a= |w_j\cdots w_{i-2}|_a-1$, $|f|_b=
    |w_j\cdots w_{i-2}|_b$, and
    $\sum_{k=1}^{l}\pi_2(f_k)=\sum_{k=j}^{i-2}\pi_2(w_k)$, which is a
    contradiction. The proof is similar in the case $|f|_d=1$.
\end{itemize}

\smallskip
\noindent
{\bf 3.2.d)}
If $B_{i-1}<y<B_{i}-1$ or if $y=B_{i}-1$ and $B_i-B_{i-1}=\lfloor\beta\rfloor$, then consider the move
$$(x,y)-(A_{i-1},B_{i-1})=(1,k) \text{ where }k<\lfloor\beta\rfloor.$$
\begin{itemize}
\item[$\gamma<0$:] This move is allowed since $(1,k)$ never occurs in $\mathbf{w}_-$.
\item[$\gamma>0$:] This move is allowed whenever $k\neq B_1$
  (recall that $(1,B_1)$ occurs as $w_0$ in $\mathbf{w}_+$). Now
  assume that $y=B_{i-1}+B_1$. In particular, we get $B_1\le \lfloor\beta\rfloor -1$.

  In the case where $w_1\cdots w_{i-2}$ is a palindrome, we show that
  we can play to $(0,0)$.  Assume to the contrary that there exists a
  factor $f=f_1\cdots f_l$ occurring in $\mathbf{w}_+$ such that
  $\sum_{k=1}^lf_i=(x,y)$.  Necessarily, $l\le i$. If $l=i$, then
  $\sum_{k=1}^l \pi_2(f_k)\ge \sum_{k=1}^{i-2}
  \pi_2(w_k)+2\lfloor\beta\rfloor$ since $\mathbf{s}_{\beta,\delta}$
  is Sturmian. But we also have $\sum_{k=1}^l
  \pi_2(f_k)=y=\sum_{k=1}^{i-2} \pi_2(w_k)+2B_1$ contradicting the
  fact that $B_1<\lfloor\beta\rfloor$. If $l<i-1$, then $\sum_{k=1}^l
  \pi_2(f_k)\le \sum_{k=1}^{i-2} \pi_2(w_k)+1$. But we also have
  $\sum_{k=1}^l \pi_2(f_k)=y=\sum_{k=1}^{i-2} \pi_2(w_k)+2B_1$ leading
  to the contradiction $2B_1\le 1$.

  Hence $l=i-1$ and using again the Sturmian property, we get
  $\sum_{k=1}^{i-1} \pi_2(w_k)-1 \le \sum_{k=1}^l \pi_2(f_k)\le
  \sum_{k=1}^{i-1} \pi_2(w_k)+1$. But $\sum_{k=1}^l
  \pi_2(f_k)=y=\sum_{k=1}^{i-1} \pi_2(w_k)+2B_1-B_i+B_{i-1}$ leading
  to $$B_i-B_{i-1}-1\le 2B_1\le B_i-B_{i-1}+1.$$  The prefix $w_0\cdots
  w_{i-1}$ is not bad.  According to Definition~\ref{def:pbad}, the
  situation can be split into three cases. Assume first that
  $2B_1=\pi_2(w_{i-1})-1$. The factor $f$ of length $i-1$ is such that
  $\sum_{k=1}^l\pi_1(f_k)=i$, this implies that $|f|_c+|f|_d=1$.
  Moreover,
  $\sum_{k=1}^l\pi_2(f_k)=y=\sum_{k=1}^{i-1}\pi_2(w_k)+2B_1-\pi_2(w_{i-1})$.
  Hence $\sum_{k=1}^l\pi_2(f_k)=\sum_{k=1}^{i-1}\pi_2(w_k)-1$ and the
  factor $f$ must satisfy $|w_1\cdots w_{i-2}|_a=|f|_a$, $|w_1\cdots
  w_{i-2}|_b-1=|f|_b$, $|f|_c=1$. This contradicts the fact that
  $w_0\cdots w_{i-1}$ is not bad. The last two cases where
  $2B_1=\pi_2(w_{i-1})$ and $2B_1=\pi_2(w_{i-1})+1$ are treated
  similarly.

  If $w_1\cdots w_{i-2}$ is not a palindrome, consider the smallest
  $j$ such that $w_{1+j}\neq w_{i-2-j}$. We can play to
  $(A_{i-j-2},B_{i-j-2})$ except if there exists some factor
  $f=w_{t}\cdots w_{t+l-1}$ occurring in $\mathbf{w}_+$ and satisfying $$\sum_{k=t}^{t+l-1}
  w_k=\sum_{k=i-j-2}^{i-2} w_k+(1,B_1).$$ Assume to conclude this part
  of the proof that we are in this latter situation.

  Clearly we have that $l\leq j+2$. If $t=0$, since
  $\pi_1(w_0)=\cdots=\pi_1(w_{l-1})=1$, then $l=j+2$, and
  $\sum_{k=t}^{t+l-1} \pi_2(w_k) \neq \sum_{k=i-j-2}^{i-2}
  \pi_2(w_k)+B_1$. If $t\neq 0$ and $l<j+2$, from
  Remark~\ref{rem:heavy-light}, we have $\sum_{k=t}^{t+l-1}
  \pi_2(w_k)-1\le \sum_{k=i-j-2}^{i-2} \pi_2(w_k)$. Hence,
  $\sum_{k=t}^{t+l-1} \pi_2(w_k)< \sum_{k=i-j-2}^{i-2}
  \pi_2(w_k)+B_1$. If $t\neq 0$ and $l=j+2$, from
  Remark~\ref{rem:heavy-light}, we have $\sum_{k=t}^{t+l-1}
  \pi_2(w_k)\geq \sum_{k=i-j-2}^{i-1} \pi_2(w_k)-1$ and as
  $\sum_{k=t}^{t+l-1} \pi_2(w_k)=\sum_{k=i-j-2}^{i-1}
  \pi_2(w_k)+B_1-\pi_2(w_{i-1})$. Hence we obtain $B_1\geq \pi_2(w_{i-1})-1$,
  which is a contradiction if $B_1\neq \lfloor\beta\rfloor-1$ or
  if $\pi_2(w_{i-1})=\lfloor\beta\rfloor+1$. In the case where
  $B_1=\lfloor\beta\rfloor-1$ and
  $\pi_2(w_{i-1})=\lfloor\beta\rfloor$, since $B_1\geq 3$, we get
  $\lfloor\beta\rfloor\geq 4$.  In this particular case, none of the
  three situations described in Definition~\ref{def:pbad} can occur.
  Indeed, we have
  $2B_1=2\lfloor\beta\rfloor-2>\lfloor\beta\rfloor+1=\pi_2(w_{i-1})+1$.
  Hence one can use the strategy described above (i.e., playing to
  $(0,0)$), by acting as if $w_1\cdots w_{i-2}$ was a palindrome.

  \end{itemize}

\smallskip
\noindent
{\bf 3.2.e)}
If $y=B_{i}-1$ and $\pi_2(w_{i-1})=B_i-B_{i-1}=\lfloor\beta\rfloor+1$, we consider two cases.

\begin{itemize}
  \item[$\gamma>0$:] We will show that moving to $(0,0)$ is always
    possible. For this purpose, it suffices to show that there exists
    no factor $f=f_1\cdots f_l$ of $\mathbf{w}_+$ such that
    $\sum_{k=1}^{l}f_k=(i,B_{i}-1)$. Assume that such a factor $f$
    exists. Necessarily $f$ satisfies $l\leq i$. If $l<i$, then
    $\sum_{k=1}^{l}\pi_2(f_k)\leq
    \sum_{k=0}^{i-2}\pi_2(w_k)+2=B_{i-1}+2$, yielding a contradiction.
    If $l=i$, we have $\sum_{k=1}^{l}\pi_2(f_k)\geq
    \sum_{k=1}^{i-1}\pi_2(w_k)-1+\lfloor\beta\rfloor$ because
    $w_1w_2\cdots$ is Sturmian. Since we must have
    $y=B_1+\sum_{k=1}^{i-1}\pi_2(w_k)-1=\sum_{k=1}^{l}\pi_2(f_k)$, it
    implies $B_1\geq \lfloor\beta\rfloor$. As
    $B_1<\lfloor\beta\rfloor+1$ when $\gamma>0$, we have
    $B_1=\lfloor\beta\rfloor$. We deduce that
    $s_{\beta,\delta}(0)=\lfloor\beta\rfloor+1$, and
    $\sum_{k=0}^{i-1}s_{\beta,\delta}(k)=\sum_{k=0}^{i-1}\pi_2(w_k)+1=B_i+1$.
    Since a Sturmian word is balanced, no factor of length $i$ gives a
    sum equal to $B_i-1$.

\item[$\gamma<0$:] We consider the prefix $p$ of $\mathbf{w}$ of length $i$. Since
$A_{j}-A_{j-1}=1$ for all $0<j\leq i$, we know that $p\in\{a,b\}^*$ and
$|p|<B_1$. Since $(\alpha,\beta,\gamma,\delta)$ is good, the prefix
$p$ is not suffix-bad. In particular, it means that there exists some
$j\in\{0,\ldots,i-1\}$ such that $w_j\cdots w_{i-1}$ does not satisfy
property (B.1). Playing from $(x,y)$ to $(A_{j},B_{j})$ is thus
allowed. Indeed, assume on the contrary that $(x-A_{j},y-B_{j})$
belongs to $\mathcal{P-P}$ and is of the form $(A_n-A_m,B_n-B_m)$ (the
other case $(A_m-B_n,B_m-A_n)$ cannot occur). It would mean that there
exists a factor $f=f_1\cdots f_l$ of $\mathbf{w}$ such that
$\sum_{k=1}^{l}f_k=(i-j,B_i-1-B_j)$. Necessarily we have $l\leq i-j$.
If $l<i-j$, then $\sum_{k=1}^{l}\pi_2(f_k)\leq
\sum_{k=j}^{i-2}\pi_2(w_k)+1$ since $\pi_2(\mathbf{w})$ is Sturmian.
But $\sum_{k=1}^{l}\pi_2(f_k)=B_i-1-B_j=\sum_{k=j}^{i-1}\pi_2(w_k)-1$,
leading to the contradiction $\pi_2(w_{i-1})\leq 2$. Hence $l=i-j$, meaning that $f$ is a factor of $\mathbf{w}$ over $\{a,b\}^*$ of length $(i-j)$. Since $w_j\cdots w_{i-1}$ does not satisfy
property (B.1), we get $|f|_b\geq |w_j\cdots w_{i-1}|_b$. In other words, we have $\sum_{k=1}^{l}\pi_2(f_k)>
\sum_{k=j}^{i-1}\pi_2(w_k)-1=B_i-1-B_j$, a contradiction.\\
\end{itemize}

\end{proof}

\begin{corollary}
    There exists an invariant game having $\mathcal{P}$
    as set of $P$-positions if and only if the $4$-tuple
    $(\alpha,\beta,\gamma,\delta)$ is good.
\end{corollary}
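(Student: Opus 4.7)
The plan is to derive this corollary as an immediate consequence of Theorem~\ref{the:1} and Theorem~\ref{the:principal}, which together form precisely the desired equivalence.

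For the \emph{only if} direction, I would argue by contraposition: suppose the $4$-tuple $(\alpha,\beta,\gamma,\delta)$ is not good. Then Theorem~\ref{the:1} asserts that no invariant game admits $\mathcal{P}$ as its set of $P$-positions. Equivalently, the existence of such an invariant game forces the $4$-tuple to be good.

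For the \emph{if} direction, I would invoke Theorem~\ref{the:principal} directly. Assuming the $4$-tuple is good, this theorem exhibits a concrete witness, namely the invariant game whose move set is the maximal admissible set $\mathcal{M}=\mathbb{N}^2\setminus(\mathcal{P}-\mathcal{P})$. By the general remark preceding Lemma~\ref{lem4}, this is the largest set of moves compatible with $\mathcal{P}$ being a set of $P$-positions (a move between two $P$-positions can never be allowed in an invariant game), and Theorem~\ref{the:principal} confirms that with this choice $\mathcal{P}$ is indeed realized as the set of $P$-positions.

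There is no substantial obstacle to overcome at this stage: Theorem~\ref{the:1} already encodes the combinatorial obstructions coming from suffix-bad prefixes of $\mathbf{w}_-$ and from bad prefixes of $\mathbf{w}_+$, while Theorem~\ref{the:principal} carries out the long case analysis verifying that, in the absence of such obstructions, every position in $\mathcal{N}=\mathbb{N}^2\setminus\mathcal{P}$ admits a legal move into $\mathcal{P}$. The corollary is simply the logical packaging of these two statements into a single equivalence, and no further technical step is required.
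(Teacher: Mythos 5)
Your proposal is correct and coincides with the paper's intent: the corollary is stated without a separate proof precisely because it is the immediate conjunction of Theorem~\ref{the:1} (the contrapositive of the \emph{only if} direction) and Theorem~\ref{the:principal} (which supplies the witness game with move set $\mathcal{M}=\mathbb{N}^2\setminus(\mathcal{P}-\mathcal{P})$ for the \emph{if} direction). No further argument is needed.
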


\section{Characterizing good $4$-tuples}\label{sec:4}

In the previous section, we have described combinatorial conditions
(expressed in Definition~\ref{def:pbad} and
Definition~\ref{def:pbad2}) on the word $\mathbf{w}_+$ or
$\mathbf{w}_-$ leading to the existence of an invariant game. In this
section, we translate these conditions into an algebraic setting
better suited to tests.

The tests described in this section are all of the following kind.
Take two intervals $I,J$ over $[0,1)$ interpreted as intervals over
the unit circle $\mathbb{T}^1=\mathbb{R}/\mathbb{Z}$, i.e., if $a>b$, then the interval
$[a,b)$ is $[a,1)\cup[0,b)$. For a given $4$-tuple
$(\alpha,\beta,\gamma,\delta)$ of real numbers, we ask, whether or not
there exists some $i$ such that $R_{\alpha,\beta}^i(\gamma,\delta)\in
I\times J$.

Recall that $\alpha,\beta,1$ are {\em rationally independent} (i.e.,
linearly independent over $\mathbb{Q}$), if whenever there exist
integers $p$ and $q$ such that $p\alpha+q\beta$ is an integer, then
$p=q=0$.

The extension of the density theorem of Kronecker is well-known: the set
$$\{ R_{\alpha,\beta}^i(\gamma,\delta)=(\{i\alpha+\gamma\},\{i\beta+\delta\})\in\mathbb{T}^2 \mid i\in\mathbb{N}\}$$
is dense in $\mathbb{T}^2$ if and only if $\alpha,\beta,1$ are
rationally independent \cite{Z}.  So, in that latter case, there exist
infinitely many $i$ such that $R_{\alpha,\beta}^i(\gamma,\delta)$
belongs to a non-empty interval $I\times J$.

If $\alpha,\beta,1$ are rationally dependent, since $\alpha$ and
$\beta$ are irrational numbers, there exist integers $p,q,r$ with
$p,q\neq 0$ such that $p\alpha+q\beta=r$. From \eqref{eq:r1}, we
deduce that $q\beta^2+(p-q-r)\beta+r=0$, i.e., $\beta$ is thus an
algebraic number of degree $2$. Of course, the same conclusion holds
for $\alpha$. In this situation, the set of points
$\{R^{n}_{\alpha,\beta}(\gamma,\delta)\mid n\in\mathbb{N}\}$ is dense
on a straight line in $\mathbb{T}^2$ with rational slope, see for instance
Example~\ref{exa:line}. Hence the initial question is reduced to
determine whether or not a line intersect a rectangle. Moreover, if
$\alpha$ and $\beta$ are irrational numbers satisfying \eqref{eq:r1}
but they are not algebraic of degree $2$, then $\alpha,\beta,1$ are
rationally independent.

\begin{example}\label{exa:line}
    Consider the positive root
    $\alpha=(3+\sqrt{17})/2$ of $x^2 - 3 x - 2$. We get $\beta=(7+\sqrt{17})/8$ and $\alpha=4\beta-2$. Hence, we get
$$R^{n}_{\alpha,\beta}(x,y)=(\{x+n\alpha\},\{y+n\beta\})=(\{x+4n\beta\},\{y+n\beta\})$$
showing that $R_{\alpha,\beta}$ corresponds to a translation of
$\{\beta\}(4,1)$ in $\mathbb{T}^2$. Since $\beta$ is irrational,
thanks to Kronecker theorem, the set of points
$\{R^{n}_{\alpha,\beta}(x,y)\mid n\in\mathbb{N}\}$ is dense on the
straight line in $\mathbb{T}^2$ with rational slope $1/4$ and passing
through $(x,y)$.
\begin{figure}[htbp]  \centering
    \scalebox{1}{\includegraphics{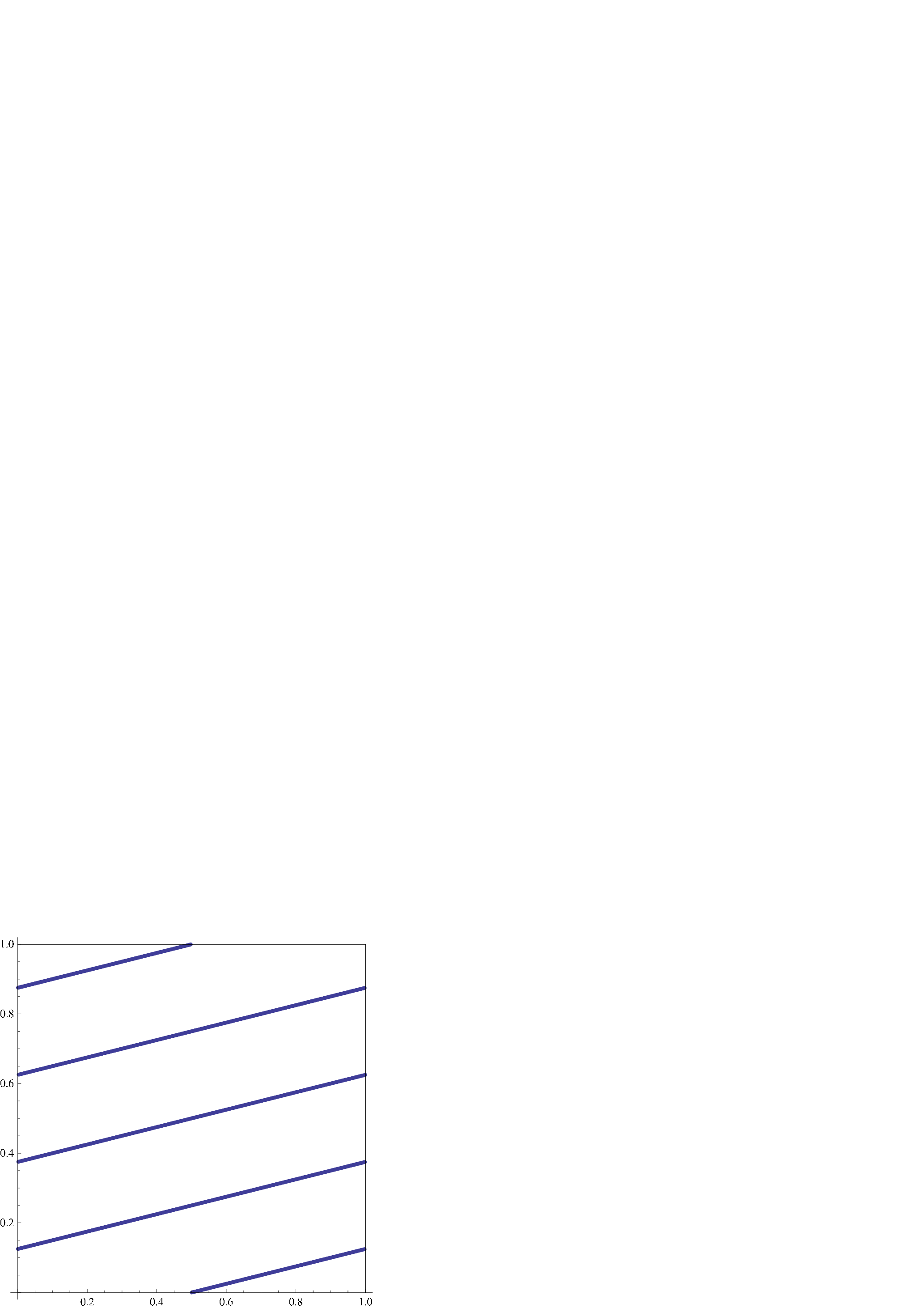}}
    \caption{An example of orbit when $\alpha,\beta,1$ are rationally dependent.}
    \label{fig:dep}
\end{figure}
\end{example}

\subsection{Testing a bad prefix in $\mathbf{w}_+$}
Let $p=w_0u_1\cdots u_n=w_0u$ be a prefix of length $n+1$ occurring in
$\mathbf{w}_+$. If $u_1\cdots u_{n-1}$ is not a palindrome or if $u\not\in\{a,b\}^*$ or if $2B_1\not\in\{\pi_2(u_n)-1,\pi_2(u_n),\pi_2(u_n)+1\}$, then $p$
is not bad. Otherwise, one of the following three situations may occur.

If $2B_1=\pi_2(u_n)-1$, then $p$ is bad if and only if $\pi_1(u)$ is
light, $\pi_2(u)$ is heavy, i.e., $R_{\alpha,\beta}(\gamma,\delta)\in
I_{L,\alpha}(n)\times I_{H,\beta}(n)$ where, as in \eqref{eq:IH},
$I_{L,\alpha}(n)=[0,1-\{n\alpha\})$ and
$I_{H,\alpha}(n)=[1-\{n\beta\},1)$, and there exists a factor $v\in\Fac_n(\mathbf{w}_+)$ such that $\pi_1(v)$ is
heavy, $\pi_2(u)$ is light, i.e., there exists $i$ such that
$$R_{\alpha,\beta}^i(\gamma,\delta)\in I_{H,\alpha}\times
I_{L,\beta}(n).$$

If $2B_1=\pi_2(u_n)$, then $p$ is bad if and only if $\pi_1(u)$ is
light and there exists a factor $v\in\Fac_n(\mathbf{w}_+)$ such that $\pi_1(v)$ is
heavy and both $\pi_2(u)$ and $\pi_2(v)$ are either light or heavy, i.e., there exists $i$ such that
$$R_{\alpha,\beta}^i(\gamma,\delta)\in I_{H,\alpha}\times
I_{L,\beta}(n) \text{ if }R_{\alpha,\beta}(\gamma,\delta)\in
I_{L,\alpha}(n)\times I_{L,\beta}(n),$$
$$R_{\alpha,\beta}^i(\gamma,\delta)\in I_{H,\alpha}\times
I_{H,\beta}(n) \text{ if }R_{\alpha,\beta}(\gamma,\delta)\in
I_{L,\alpha}(n)\times I_{H,\beta}(n).$$

If $2B_1=\pi_2(u_n)+1$, then $p$ is bad if and only if $\pi_1(u)$ and $\pi_2(u)$ are light, i.e., $R_{\alpha,\beta}(\gamma,\delta)\in
I_{L,\alpha}(n)\times I_{L,\beta}(n)$ and there exists a factor $v\in\Fac_n(\mathbf{w}_+)$ such that $\pi_1(v)$ and $\pi_2(u)$ are heavy, i.e., there exists $i$ such that
$$R_{\alpha,\beta}^i(\gamma,\delta)\in I_{H,\alpha}\times
I_{H,\beta}(n).$$
\subsection{Testing (B.1)}
Given $u=u_1\cdots u_n\in \{a,b\}^*\cap\Fac(\mathbf{w}_-)$, we can
proceed as follows to decide whether or not $u$ satisfies (B.1).  If
$\pi_2(u_1\cdots u_n)$ is light, i.e., $|\pi_2(u_1\cdots
u_n)|_{\lfloor\beta\rfloor+1}=\lceil n\{\beta\} \rceil-1$, then $u$ does not satisfy (B.1).
Otherwise, $\pi_2(u_1\cdots u_n)$ is heavy. In that case,  $u$ satisfies (B.1) if and only if,  there exists an
integer $i$ such that
\begin{itemize}
  \item $1^n$ occurs in $\mathbf{s}_{\alpha,\gamma}$ in position $i$ and,
  \item the factor of length $n$ occurring in
    $\mathbf{s}_{\beta,\delta}$ in position $i$ is light.
\end{itemize}
These last two conditions can be tested as follows. As in \eqref{eq:Iv}, consider the two intervals
$$I_{1^n,\alpha}=I_1\cap R_\alpha^{-1}(I_1)\cap\cdots \cap R_\alpha^{-n+1}(I_1) \text{ where }I_1=[0,1-\{\alpha\})$$
and, as in \eqref{eq:IH}, $I_{L,\beta}(n)=[0,1-\{n\beta\})$. Using
Lemma~\ref{lem:algo}, the two above conditions hold true if and only
if $I_{1^n,\alpha}\neq\emptyset$ and there  exists $i$ such that
$$R_{\alpha,\beta}^i(\gamma,\delta)\in I_{1^n,\alpha}\times
I_{L,\beta}(n).$$

\subsection{Testing (B.2)}
Given $u=u_1\cdots u_n\in \{a,b\}^*\cap\Fac(\mathbf{w}_-)$, we can
proceed as follows to decide whether or not $u$ satisfies (B.2).  If
$\pi_2(u_1\cdots u_{n-1})$ is light, i.e., $|\pi_2(u_1\cdots
u_{n-1})|_{\lfloor\beta\rfloor+1}=\lceil (n-1)\{\beta\} \rceil-1$,
then $u$ does not satisfy (B.2).  Otherwise, $\pi_2(u_1\cdots
u_{n-1})$ is heavy.

If $\lceil (n-1)\{\alpha\}\rceil>2$, then any factor of length $n-1$
in $\mathbf{s}_{\alpha,\gamma}$ contains at least two symbols $2$ and
$u$ does not satisfy (B.2).

If $\lceil (n-1)\{\alpha\}\rceil=1$ (resp. if $\lceil
(n-1)\{\alpha\}\rceil=2$), then any factor of length $n-1$ in
$\mathbf{s}_{\alpha,\gamma}$ with exactly one symbol $2$ is heavy (resp. light).  In
that case, $u$ satisfies (B.2) if and only if, there exists an integer
$i$ such that
\begin{itemize}
  \item the factor of length $n-1$ occurring in
    $\mathbf{s}_{\alpha,\gamma}$ in position $i$ is heavy (resp. light),
  \item the factor of length $n-1$ occurring in
    $\mathbf{s}_{\beta,\delta}$ in position $i$ is light.
\end{itemize}
The two above conditions hold true if and only if there exists $i$
such that $$R_{\alpha,\beta}^i(\gamma,\delta)\in
I_{H,\alpha}(n-1)\times I_{L,\beta}(n-1)$$ (resp. if and only if there
exists $i$ such that $R_{\alpha,\beta}^i(\gamma,\delta)\in
I_{L,\alpha}(n-1)\times I_{L,\beta}(n-1)$).

\section{$B_1$-superadditivity is not a necessary condition for invariance}\label{sec:b1}

As mentioned in Section 1.2, the authors of \cite{Lar} conjectured
that a pair $(A_n,B_n)_{n\ge 1}$ of non-homogeneous complementary
Beatty sequences with $A_1=1$ give rise to an invariant game if and
only if the sequence $(B_n)_{>0}$ is $B_1$-superadditive. In what
follows we provide counterexamples to this assumption, meaning that
the notions of a good $4$-tuple given in Definition~\ref{def:good} and
$B1$-superadditivity are not equivalent.

\subsection{Counterexamples with $\gamma<0$} All sequences satisfying
$\gamma<0$ are never superadditive, but some admit invariant games.
The non-superadditivity of such sequences can be easily proved in the
following Lemma.
\begin{lemma}
    Given a pair $(A_n,B_n)_{n\ge 1}=(\lfloor n\alpha+\gamma
    \rfloor,\lfloor n\beta+\delta \rfloor)_{n\ge 1}$ of
    non-homogeneous complementary Beatty sequences with $\gamma<0$,
    there exists some integers $n,m>0$ such that $B_m+B_n>B_{m+n}$.
\end{lemma}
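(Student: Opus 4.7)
The plan is to rewrite $B_m+B_n-B_{m+n}$ in terms of the fractional parts $f_k:=\{k\beta+\delta\}$, and then use equidistribution to produce a pair making this quantity positive. From the defining identity $k\beta+\delta=B_k+f_k$ applied to $k=m$ and $k=n$, adding and subtracting $\delta$ yields
$$(m+n)\beta+\delta=B_m+B_n+f_m+f_n-\delta.$$
Since $B_m+B_n\in\mathbb{Z}$ and $B_{m+n}=\lfloor(m+n)\beta+\delta\rfloor$, taking the floor of both sides gives the key identity
$$B_m+B_n-B_{m+n}=-\lfloor f_m+f_n-\delta\rfloor.$$

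By Remark~\ref{rem2}, the hypothesis $\gamma<0$ (combined with $1-\alpha\le\gamma$ coming from $A_1=1$) forces $\delta>0$. Any pair $(m,n)$ with $f_m+f_n<\delta$ therefore satisfies $f_m+f_n-\delta<0$, so the floor above is at most $-1$, whence
$$B_m+B_n\ge B_{m+n}+1>B_{m+n},$$
which is exactly what the lemma asserts. So the lemma reduces to producing a single pair $(m,n)$ with $f_m+f_n<\delta$.

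It remains to exhibit such a pair. Since $\beta$ is irrational, Weyl's equidistribution theorem implies that $(\{k\beta+\delta\})_{k\ge 1}$ is equidistributed (in particular dense) in $[0,1)$, so there exist infinitely many indices $n$ with $0<f_n<\delta/2$ (note that condition~\eqref{eq:r5} already prevents $f_n=0$, but this is incidental). Taking $m=n$ for any such index gives $f_m+f_n<\delta$ and concludes the proof. There is no genuine obstacle here: the argument is a one-line floor-function computation followed by a standard appeal to equidistribution of irrational rotations.
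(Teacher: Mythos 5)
Your proof is correct and follows essentially the same route as the paper's: both arguments reduce to the fact that $\gamma<0$ forces $\delta>0$, pick $n$ with $\{n\beta+\delta\}<\delta/2$ by density of the irrational rotation, and take $m=n$ to get $2B_n>B_{2n}$. Your floor-function identity $B_m+B_n-B_{m+n}=-\lfloor f_m+f_n-\delta\rfloor$ is just a mild repackaging of the paper's direct estimate $2B_n>2n\beta+\delta>B_{2n}$.
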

\begin{proof}
First note that according to Remark \ref{rem2}, we have $0<\delta<1$. Since $\beta$ is irrational, there exists some integer $n>0$ such that $\{\beta n +\delta\}<\frac{\delta}{2}$. With such an $n$ we have $$B_n=\lfloor \beta n+\delta \rfloor=\beta n+\delta-\{\beta n +\delta\}>\beta n+\frac{\delta}{2}.$$ By multiplying par $2$ we obtain
$$2B_n>\beta 2n+\delta>B_{2n},$$ showing the desired result.
\end{proof}

Now, all the wanted counterexamples are those satisfying Definition
\ref{def:good}. As an illustration, take for instance the sequence
with
\begin{equation}
    \label{eq:paramneg}
    \beta=1.99+\frac{\sqrt{5}}{2},\quad \alpha=\frac{\beta}{\beta-1},\quad \gamma=-0.2\ \text{ and }\ \delta=-\frac{\beta\gamma}{\alpha}.
\end{equation}

$$\begin{array}{c|cccccccccccccccc}
n&0& 1& 2& 3& 4& 5& 6& 7& 8& 9& 10& 11& 12& 13& 14\\
\hline
A_{n+1}-A_n&1& 1& 2& 1& 2& 1& 2& 1& 2& 1& 2& 1& 1& 2& 1 \\
B_{n+1}-B_n&3& 3& 3& 3& 3& 4& 3& 3& 3& 3& 3& 3& 3& 3& 4\\
\hline
\mathbf{w}_-&a&a&c&a&c&b&c&a&c&a&c&a&a&c&b\\
\end{array}$$

This sequence admits an invariant game since the $4$-tuple $(\alpha,\beta,\delta,\gamma)$ is good. Indeed, according to Definition \ref{def:good} and since $B_1=3$, it suffices to show that the prefixes $a$ and $aa$ are not suffix-bad. Clearly, since they do not contain any letter $b$, they satisfy neither $(B.1)$ nor $(B.2)$ of Definition \ref{def:pbad2}.

\subsection{Counterexamples with $\gamma>0$}
In the case where $\gamma>0$, one can also find sequences that are
superadditive, not $B_1$-superadditive, and that admit an invariant
game. As an example, consider the following real numbers
\begin{equation}
    \label{eq:param}
    \beta=8+\frac{1+\sqrt{5}}{2},\quad \alpha=\frac{\beta}{\beta-1},\quad \delta=-\frac{5\sqrt{7}}{2}\ \text{ and }\ \gamma=-\frac{\delta\alpha}{\beta}.
\end{equation}
We give below the first terms of the corresponding two sequences
    $$\begin{array}{c|ccccccccccccccc}
n&1& 2& 3& 4& 5& 6& 7& 8& 9& 10& 11& 12& 13& 14& 15\\
\hline
A_n&1& 2& 4& 5& 6& 7& 8& 9& 10& 11& 13& 14& 15& 16& 17\\
B_n&3& 12& 22& 31& 41& 51& 60& 70& 79& 89& 99& 108& 118& 128& 137\\
\end{array}$$

We have $B_{1+2}=22>B_1+B_2+B_1$ showing that $(B_n)_{n\ge 1}$ is not
$B_1$-superadditive. However this sequence is superadditive as it is
the case for all sequences having $\gamma>0$.  It now remains to show
that with the parameters given by \eqref{eq:param}, the sequence
$(A_n,B_n)$ corresponds to the $P$-positions of an invariant game.
For this purpose, it suffices to make use of Theorem~\ref{the:principal} and
detect directly whether or not the $4$-tuple
$(\alpha,\beta,\gamma,\delta)$ is good. This can be easily carried on as follows.
   $$\begin{array}{c|cccccccccccccccc}
n&0& 1& 2& 3& 4& 5& 6& 7& 8& 9& 10& 11& 12& 13& 14\\
\hline
A_{n+1}-A_n&1& 1& 2& 1& 1& 1& 1& 1& 1& 1& 2& 1& 1& 1& 1 \\
B_{n+1}-B_n&3& 9& 10& 9& 10& 10& 9& 10& 9& 10& 10& 9& 10& 10& 9\\
\hline
\mathbf{w}_+&e&a&d&a&b&b&a&b&a&b&d&a&b&b&a\\
\end{array}$$
From Definition~\ref{def:good}, since $B_1=3$, the only prefix of $\mathbf{w}_+$ to test is $p=ea$. If this prefix is not bad, then the $4$-tuple $(\alpha,\beta,\gamma,\delta)$ is good and from Theorem~\ref{the:principal}, the existence of an invariant game is guaranteed. In particular, $a$ is a palindrome. But $2B_1=6$ and $\pi_2(a)=9$. Hence $2B_1\not\in\{8,9,10\}$ and we can conclude directly that the prefix is good.

\begin{remark}
    Note that for the sequences satisfying $B_1=2$, the current paper
    does not provide any characterization for admitting an invariant
    game. Hence the result of Larsson {\em et al.} \cite{Lar} remains
    the best one for such sequences, asserting that
    $B_1$-superadditivity is a sufficient condition for having an
    invariant game. Yet, it is not a necessary condition any more in
    that context, since there also exist counterexamples of non
    $B1$-superadditive sequences that correspond to $P$-positions of
    invariant games.
\end{remark}

\end{document}